\def\iffujitsu{\ifx{34}}
\def\P{{\bf P}}
\def\E{{\bf E}}
\def\IR{{\bf R}}
\def\X{{\cal X}}
\def\Xt{\tilde{X}}
\def\one{{\bf 1}}
\def\ehat{\hat{e}}
\def\ebar{\bar{e}}
\def\Phat{\widehat{P}}
\def\pihat{\widehat{\pi}}
\def\liml{\lim\limits}
\def\ooN{{1 \over N}}
\def\Uniform{\textrm{Uniform}}
\def\argmin{\textrm{argmin}}
\newtheorem{theorem}{Theorem}
\newtheorem{proposition}[theorem]{Proposition}
\newtheorem{lemma}[theorem]{Lemma}
\newtheorem{corollary}[theorem]{Corollary}
\newtheorem{remark}[theorem]{Remark}
\def\eqref#1{(\ref{#1})}
\begin{document}

\baselineskip=18pt
\parskip=8pt

\centerline{\LARGE\bf Jump Markov Chains and}

\bigskip
\centerline{\LARGE\bf Rejection-Free Metropolis Algorithms}

\bigskip
\centerline{by}
\bigskip
\centerline{\large Jeffrey S.\ Rosenthal\footnote{Department of
		Statistical Sciences, University of Toronto, Canada},
	Aki Dote\footnote{Department of Electrical and Computer
			Engineering, University of Toronto, Canada}%
	$^,$\footnote{Fujitsu Laboratories Ltd., Kanagawa, Japan},
	Keivan Dabiri$^2$,}
\medskip
	\centerline{\large Hirotaka Tamura$^3$,
        Sigeng Chen$^1$,
	and Ali Sheikholeslami$^2$}

\bigskip \centerline{(Version of \today)}

\bigskip
\begin{quote}
\baselineskip=12pt
\noindent \bf Abstract. \rm
We consider versions of the Metropolis algorithm which avoid the
inefficiency of rejections.  We first illustrate that a natural Uniform
Selection Algorithm might not converge to the correct distribution.  We
then analyse the use of Markov jump chains which avoid successive
repetitions of the same state.  After exploring the properties of jump
chains, we show how they can exploit parallelism in computer hardware to
produce more efficient samples.  We apply our results to the Metropolis
algorithm, to Parallel Tempering, to a Bayesian model, to a
two-dimensional ferromagnetic 4$\times$4 Ising model, and to
a pseudo-marginal MCMC algorithm.
\end{quote}

% \medskip \section{Background -- Metropolis Algorithm}

\section{Introduction}

The Metropolis algorithm~\cite{metropolis,hastings}
is a method of designing a Markov chain
which converges to a given target
density $\pi$ on a state space $S$.
Such Markov chain Monte Carlo (MCMC) algorithms have become
extremely popular in statistical applications and have led to a tremendous
amount of research activity (see e.g.~\cite{handbook} and the many
references therein).

The Metropolis algorithm produces a Markov chain
$X_0,X_1,X_2,\ldots$ on $S$, as follows.
Given the current state $X_n$, the
Metropolis algorithm first proposes a new state $Y_n$ from a symmetric proposal
distribution $Q(X_n,\cdot)$.  It then accepts the new state (i.e., sets
$X_{n+1}=Y_n$) with
probability $\min\left( 1, \ {\pi(Y_n) \over \pi(X_n)} \right)$, i.e.\
if $U_n < {\pi(Y_n) \over \pi(X_n)}$ where $U_n$ is an
independent Uniform[0,1] random variable.  Otherwise, it rejects the
proposal (i.e., sets $X_{n+1}=X_n$).
This simple algorithm ensures that the Markov chain has
$\pi$ as a stationary distribution.

With this algorithm, the expected value $\E_\pi(h)$ of a function
$h:S\to\IR$ can then be estimated by the usual estimator, $\ehat_K = {1
\over K} \sum_{n=1}^K h(X_n)$.  The Strong Law of Large Numbers (SLLN) for
Markov chains (e.g.~\cite[Theorem~17.0.1]{MT}) says that assuming that
$\E_\pi(h)$ is finite, and that the Markov chain is irreducible with
stationary distribution $\pi$, we must have $\lim_{K\to\infty} \ehat_K =
\E_\pi(h)$, i.e.\ this estimate $\ehat_K$ is {\it consistent}.  For
example, if $h=\one_A$ is the indicator function of an event $A$, then
$\lim_{K\to\infty} \ehat_K = \P(A)$.  Or, if $h=g^k$ is a power of some
other function $g$, then $\lim_{K\to\infty} \ehat_K = \E_\pi(h) =
\E_\pi(g^k)$.  Consistency is thus a useful property which
guarantees asymptotically accurate estimates of any quantity of interest.

% A Markov chain which gives consistent estimators can also be used for {\it
% optimisation}, i.e.\ for finding the mode (maximum value) of $\pi$, either
% by running the chain for a long time and taking its empirical sample mode,
% or by keeping track of the largest value $\pi(x)$ over all samples visited.

One problem with the Metropolis algorithm is that it might reject many
proposals, leading to inefficiencies in its convergence.  Indeed, in
certain contexts the optimal Metropolis algorithm should reject over three
quarters of its proposals~\cite{RGG,statsci}.  Each rejection involves
sampling a proposed state, computing a ratio of target probabilities, and
deciding not to accept the proposal, only to remain at the current state.
These rejections are normally considered to be a necessary evil of the
Metropolis algorithm.  However, recent technological advances have allowed
for exploiting parallelism in computer hardware, computing all potential
acceptance probabilities at once, thus allowing for the possibility of
skipping the rejection steps and instead accepting a move every time.
Such rejection-free algorithms can be very efficient, but they must be
executed correctly or they can lead to biased estimates, as we now explore.

% In this paper, we consider the
% possibility of instead performing a related algorithm which still gives
% consistent estimates, but which avoids rejections.  We begin with a simple
% version which turns out to fail, before presenting correct versions later.

\section{The Uniform Selection Algorithm}

A first try at a rejection-free Metropolis algorithm might be as follows.
Suppose that from a state $x$,
one of a (large, finite) collection of states $y_1,y_2,\ldots,y_k$
(all distinct from $x$)
would have been proposed uniformly at random.  Then, sample
$U \sim \Uniform[0,1]$, and consider the
sub-collection of states $C := \{y_i : U < \pi(y_i)/\pi(x) \}$ that
``would'' have been accepted, and then pick one of the states in $C$
uniformly at random.
(If $C$ happens to be empty, then we immediately
re-sample $U$ and try again.  Technically speaking, that would be a
``rejection'', though its probability is small.)
This algorithm will always move somewhere, so there is no rejection.
However, this algorithm is different from true MCMC,
and might not converge to $\pi$, as we now show.

\medskip\noindent {\bf Example~1: }
Suppose the state space $S=\{1,2,3\}$,
with $\pi(1)=1/2$, $\pi(2)=1/3$, and $\pi(3)=1/6$,
as in Figure~\ref{threeexfig},
and suppose that from each state $x$, the chain proposes to move either to
$x-1$ or to $x+1$ with probability 1/2 each (where proposals to~0 or to~4
are always rejected).
\begin{figure}[ht]
\centerline{ {\includegraphics[width=8cm]{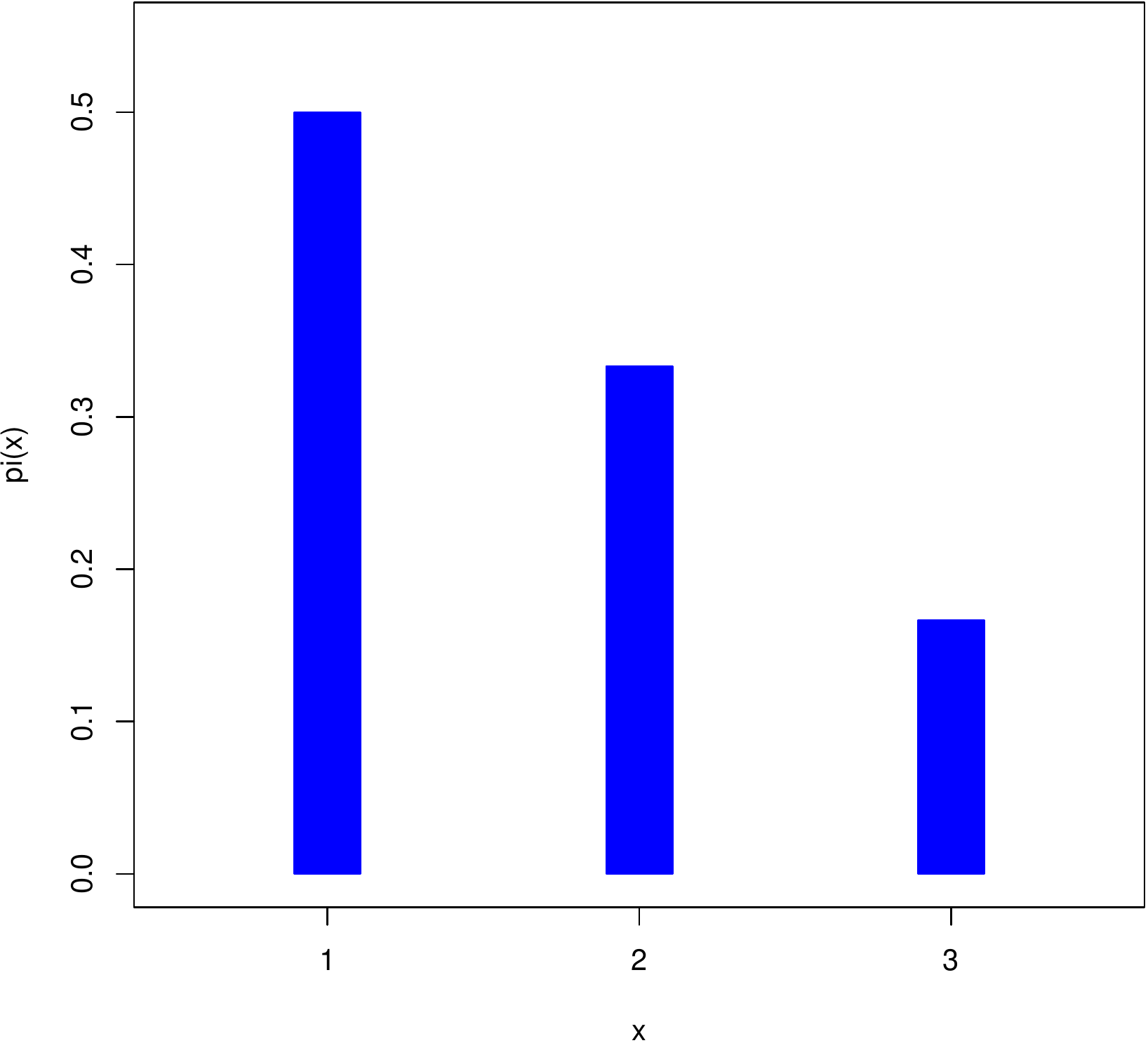}} }
\caption{\bf The target distribution for Example~1.}
\label{threeexfig}
\end{figure}
% 
% This would create a {\it bias} in the probabilities for where
% the algorithm will move next, thus preventing convergence to the true
% target distribution $\pi$.
% 
In this example,
the Metropolis algorithm would have Markov chain transition
probabilities as in Figure~\ref{metropolisfig},
which are easily computed to
have the correct limiting stationary distribution
$\pi=(1/2, 1/3, 1/6)$ as they must.
\begin{figure}[ht]
\centerline{ \fbox{\includegraphics{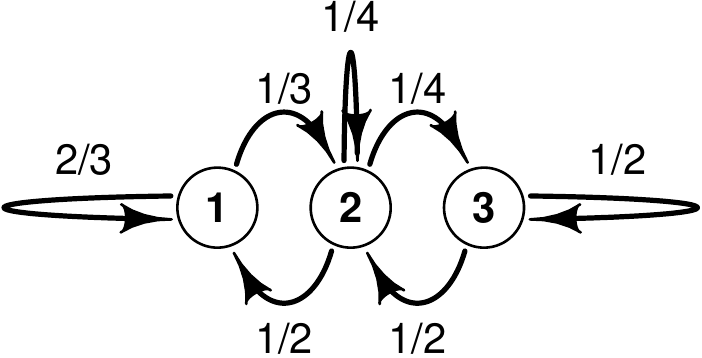}} }
\caption{\bf The Metropolis chain for Example~1.}
\label{metropolisfig}
\end{figure}
However, the Uniform Selection algorithm would have Markov chain transition
probabilities as in Figure~\ref{uniformselfig}, with
limiting stationary distribution easily computed to
instead be $(3/5, 4/15, 2/15)$ which is significantly different.
For example, from state~2, the usual Metropolis algorithm would accept
a proposed move to state~1 with
probability~1, and would accept a proposed move to state~3 with probability
$(1/6) \, / \, (1/3) = 1/2$,
so it would be {\it twice} as likely to move
to state~1 as to move to state~3.
But for the above Uniform Selection version, if $U>1/2$ then the subset $C$
would consist of just the single state~1 so it would always move to
state~1, or if $U<1/2$ then the subset $C$ would consist of the two
states~1 and~3 so it would move to state~1 or state~3 with probability
1/2 each, so overall it would move to state~1 with probability
$(1/2)(1) + (1/2)(1/2) = 3/4$ or to state~3 with probability $(1/2)(0) +
(1/2)(1/2) = 1/4$, i.e.\ it would now be {\it three} times as
likely to move to state~1 as to move to state~3, not twice.
This illustrates that this Uniform Selection algorithm will converge to the
wrong distribution, i.e.\ it will {\it fail} to converge to the
correct target distribution.
\qed

% In fact, from state~2 it would move to~1 with probability 1/2, or to~3
% with probability 1/4, or would stay at~2 with probability 1/4.

\begin{figure}[ht]
\centerline{ \fbox{\includegraphics{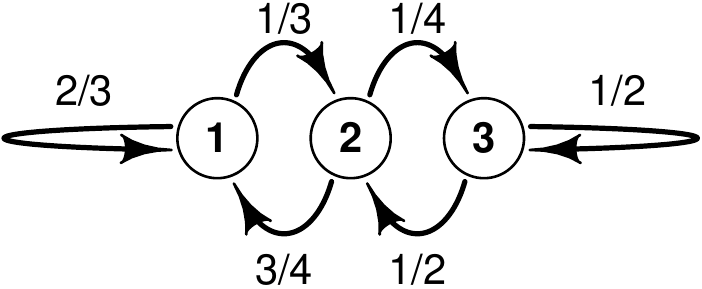}} }
\caption{\bf The Uniform Selection chain for Example~1.}
\label{uniformselfig}
\end{figure}

Our second example shows that Uniform Selection can even cause a Markov
chain to become {\it transient}.

\medskip\noindent {\bf Example~2: }
Suppose now that the
state space is the set $S = \{0,1,2,3,\ldots\}$ of all non-negative
integers, with target distribution $\pi$ defined by writing the
argument $x$ as $x=4a+b$ where $0 \le b \le 3$ is the remainder upon
dividing $x$ by~4, and defining (see Figure~\ref{infexfig})
$$
\pi(x)
\ = \ \pi(4a+b)
\ = \ {1 \over 135} \, (8/9)^a \, 2^b
\, ,
\quad 0 \le b \le 3, \quad a=0,1,2,\ldots
$$

\begin{figure}[ht]
\centerline{ {\includegraphics[width=14cm]{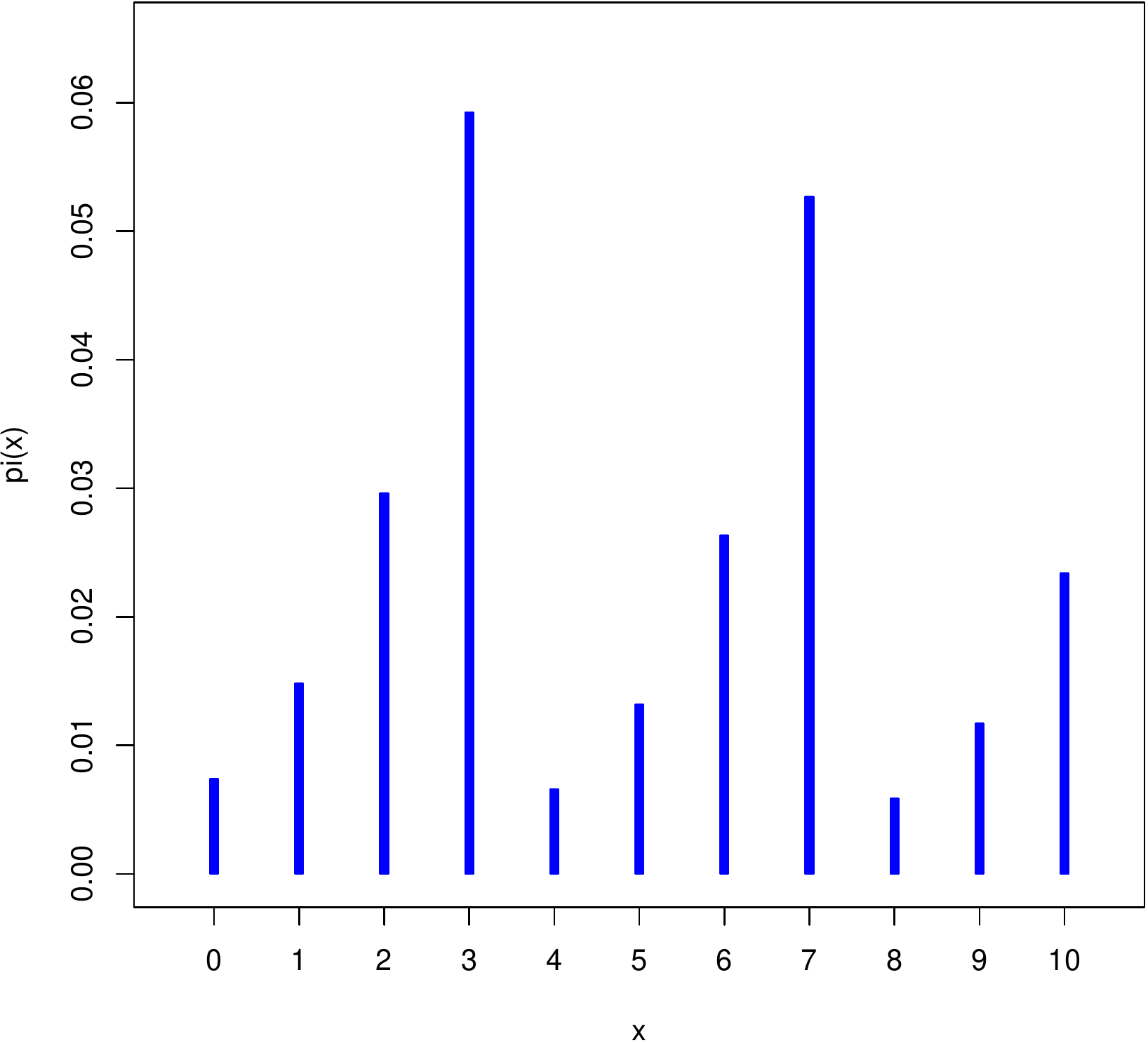}} }
\caption{\bf Part of the target distribution for Example~2.}
\label{infexfig}
\end{figure}

\noindent
As a check,
$$
\sum_{x=0}^\infty \pi(x)
\ = \ \sum_{a=0}^\infty {1 \over 135} \, (8/9)^a \, (2^0+2^1+2^2+2^3)
\ = \ {1 \over 135} \, \left( 1 \over 1-(8/9) \right) \, (15)
\ = \ 1
\, ,
$$
i.e.\ $\pi$ is
a valid probability distribution.
The Metropolis algorithm chain for this example is given by
Figure~\ref{infmetfig}, and it has the correct limiting stationary
distribution $\pi$, as it must.
\begin{figure}[ht]
\centerline{ \fbox{\includegraphics{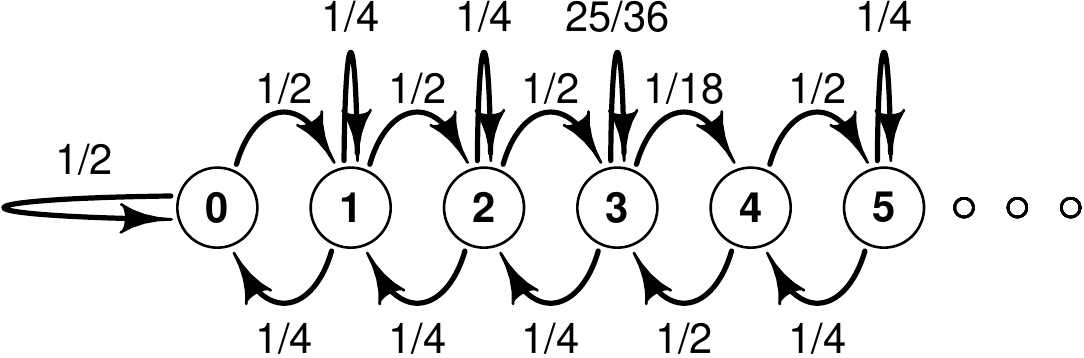}} }
\caption{\bf The Metropolis chain for Example~2.}
\label{infmetfig}
\end{figure}
However, the Uniform Selection chain is instead given by
Figure~\ref{infuniffig}.
We prove in the Appendix that this Uniform Selection
chain is transient, and in fact:

\begin{proposition}\label{appendixprop}
If the Uniform Selection chain for Example~2 begins at state $4a$
for some positive integer $a \ge 2$,
then the probability it will ever reach the state~3 is 
$\le (8/9)^{a-1} < 1$.
\end{proposition}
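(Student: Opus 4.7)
The plan is to exploit the block structure of the chain. Group the states into \emph{blocks} $\{4a, 4a+1, 4a+2, 4a+3\}$ indexed by $a \ge 0$, and observe that in the Uniform Selection transitions of Figure~\ref{infuniffig} the only inter-block moves are $4a \leftrightarrow 4a-1$ and $4a+3 \leftrightarrow 4a+4$. Since the chain can enter block~$0$ from above only through state~$3$, ever reaching state~$3$ is equivalent to ever reaching block~$0$, and I will bound the latter.

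First I would solve a within-block gambler's-ruin problem. Treating the two inter-block exit edges as absorbing, the four-state block gives a simple linear system whose solution shows: starting from the bottom-entry state $4a$, the next block-exit is downward (to block $a-1$) with probability $7/10$ and upward (to block $a+1$) with probability $3/10$; starting from the top-entry state $4a+3$, the corresponding probabilities are $4/15$ (down) and $11/15$ (up). Translation invariance of the transition probabilities makes these values independent of $a \ge 1$.

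Next let $\phi^B$ and $\phi^A$ denote, respectively, the probabilities that the chain started from the bottom-entry or top-entry of block $a$ ever reaches block $a-1$. A first-step decomposition together with the strong Markov property (an upward excursion into block $a+1$ must first return to block $a$, necessarily at its top entry, before the chain can descend further) yields the bilinear system
\[
\phi^B = \tfrac{7}{10} + \tfrac{3}{10}\,\phi^B \phi^A,
\qquad
\phi^A = \tfrac{4}{15} + \tfrac{11}{15}\,\phi^B \phi^A.
\]
Eliminating reduces to a quadratic with roots $21/22$ and $1$; the minimal non-negative solution principle for hitting probabilities selects $\phi^B = 21/22$ and $\phi^A = 8/9$.

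Finally, starting from $4a$ with $a \ge 2$, the chain must descend through every intermediate block to reach block~$0$: the first descent (from the bottom entry of block $a$) has probability $\phi^B$, and each of the subsequent $a-1$ descents (each starting from a top entry) has probability $\phi^A$. By the strong Markov property these probabilities multiply, so the probability of ever reaching state~$3$ equals $\phi^B (\phi^A)^{a-1} = \tfrac{21}{22}\,(8/9)^{a-1} \le (8/9)^{a-1}$. The main obstacle is recognizing the two distinct entry-types for each block and setting up the correct bilinear first-passage system; the gambler's-ruin computation within a block and the subsequent algebra are routine.
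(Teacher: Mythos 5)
Your argument is correct --- the within-block exit probabilities ($7/10$ and $3/10$ from the bottom entry, $4/15$ and $11/15$ from the top entry) check out against the transition probabilities of Figure~\ref{infuniffig}, the bilinear system has exactly the two roots you state, and your exact answer $(21/22)(8/9)^{a-1}$ is consistent with the paper's quantities --- but it takes a genuinely different route. The paper cuts the line at the multiples of~4 rather than grouping $\{4a,\ldots,4a+3\}$ into blocks: a within-segment gambler's ruin (Lemma~\ref{locallemma}) combined with a first-step analysis from $4a$ shows that the embedded chain on the states $\{4b\}$ is an honest nearest-neighbour random walk with up-probability $9/17$ and down-probability $8/17$, after which the classical Gambler's Ruin formula gives the bound $(8/9)^{a-1}$ in one line. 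That decomposition deliberately avoids the feature that makes your version harder: your block skeleton is not Markov in the block index alone (the exit law depends on the entry side), which is what forces you into the two-type bilinear fixed-point system. Your reward is a sharper conclusion, namely the exact value $(21/22)(8/9)^{a-1}$ rather than only the upper bound $(8/9)^{a-1}$.

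The one step you must tighten is the selection of the root $(\phi^B,\phi^A)=(21/22,\,8/9)$ over $(1,1)$. The minimal non-negative solution theorem is a statement about the \emph{linear} harmonic system of a Markov chain on its state space; it does not apply verbatim to your derived bilinear system, so as written the elimination of the root $(1,1)$ is unjustified. Two standard repairs: (i) pass to the embedded two-type chain $(B_n,E_n)$ recording the block index and entry side at successive block crossings, whose hitting probabilities of $\{B=0\}$ are the minimal non-negative solution of the linear system $h(a,B)=(7/10)\,h(a-1,T)+(3/10)\,h(a+1,B)$ and $h(a,T)=(4/15)\,h(a-1,T)+(11/15)\,h(a+1,B)$ with $h(0,\cdot)=1$; the product ansatz $h(a,T)=(8/9)^a$, $h(a,B)=(21/22)(8/9)^{a-1}$ is a non-negative bounded solution of that linear system with the correct boundary values, so the true hitting probability is at most this, which is all Proposition~\ref{appendixprop} requires; or (ii) establish transience independently (for instance via the paper's embedded walk, whose upward drift $9/17>1/2$ forces $\phi^A<1$), which rules out the root $(1,1)$. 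With either patch your proof is complete.
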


\noindent
That is, the Uniform Selection
chain might fail to ever reach the optimal value.
For example, if $X_0=100$, then $a=25$ and the probability of failure
is at least $1 - (8/9)^{24} > 0.94 = 94\%$.
This is also illustrated by the simulation\footnote{Performed using
the C program available at: http://probability.ca/rejfree.c}
in Figure~\ref{exsimfig} with initial state $X_0=100$.
\qed

% So, the Uniform Selection algorithm for
% this example might never ever reach the optimal state (i.e., 3) at all.
% Thus, this example shows that the Uniform Selection algorithm will not
% only converge to the wrong limiting distribution, it might also fail to
% find the optimal value, too.

% Now, it could be argued that even if the above algorithm converges
% to the wrong limiting distribution, it might still eventually find the
% modal value, so it might still suffice for optimisation purposes.  However,

These examples show that the Uniform Selection algorithm may
converge to the wrong limiting distribution, and thus should not be used
for sampling purposes.

Example~2 also has implications for optimisation.
Any Markov chain which gives consistent estimators can be used to find the
mode (maximum value) of $\pi$, either
by running the chain for a long time and taking its empirical sample mode,
or by keeping track of the largest value $\pi(x)$ over all samples visited.
However, Example~2 shows that a Uniform Selection chain could be transient
and thus fail to find or converge to the maximum value at all.
Of course, if the state space $S$ is required to be {\it finite}, then any
irreducible chain will eventually find the optimal value.  However,
the time to find it could be extremely large.  Indeed, the
Appendix also shows that if Example~2
is instead truncated at a large value $4L$, then
each attempt from $4L$ to reach state~3 before returning to $4L$
would have probability less than $(8/9)^{L-1}$ of success.  Hence,
the expected time to ever reach the state~3 would be exponentially large
as a function of~$L$, and the chain
would still spend nearly all of its time very near to the state~$4L$, so
its samples and sample mean and sample mode would all be extremely far
from the true optimal state~3.

\begin{figure}[ht]
\centerline{ \fbox{\includegraphics{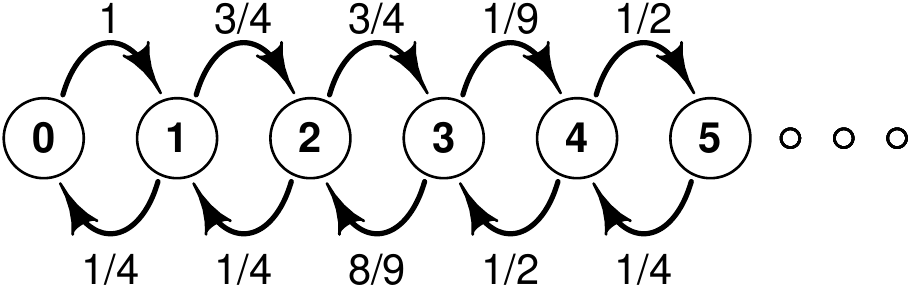}} }
\caption{\bf The Uniform Selection chain for Example~2.}
\label{infuniffig}
\end{figure}

\begin{figure}[ht]
\centerline{ {\includegraphics[width=12cm]{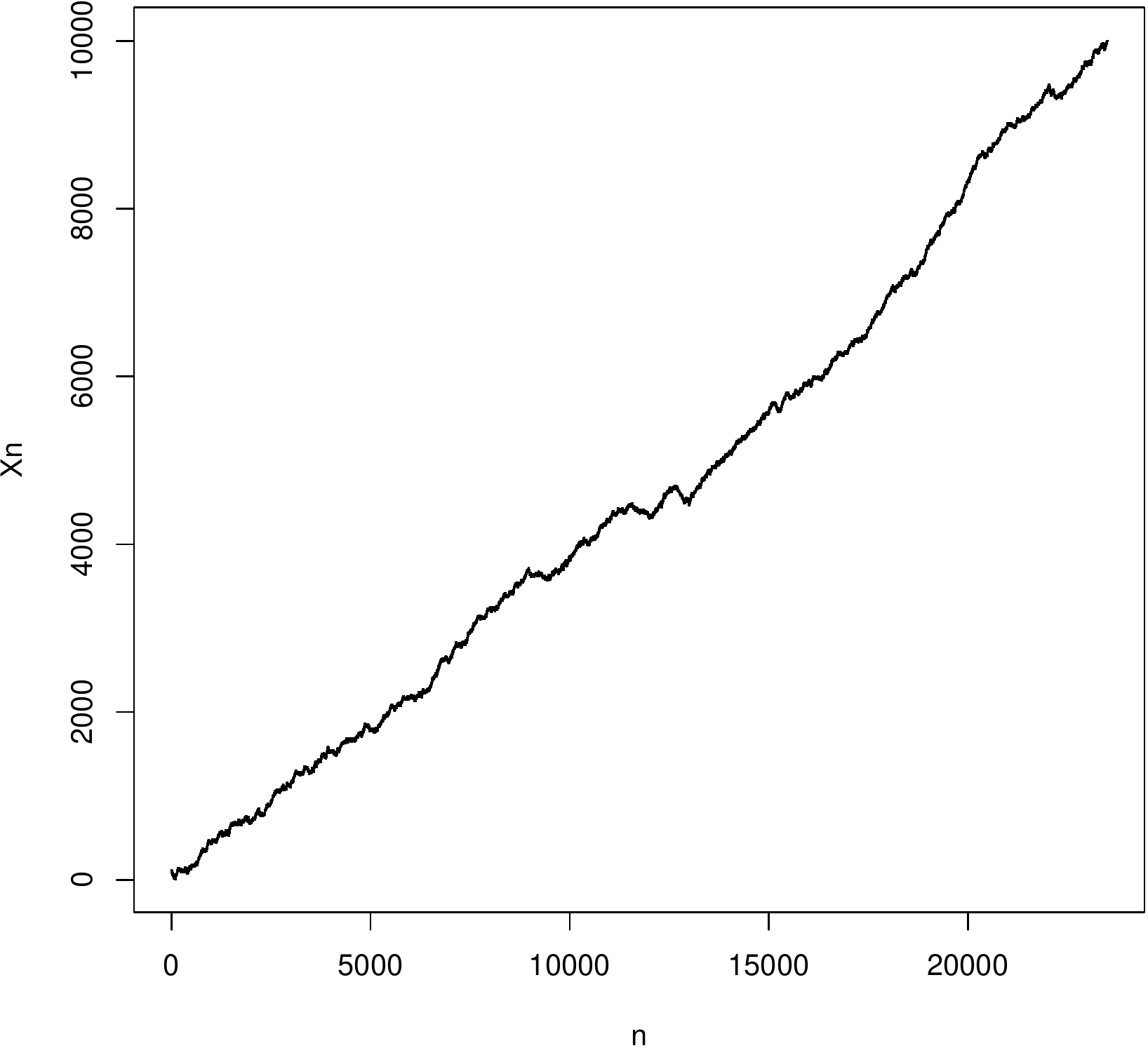}} }
\caption{\bf Output from the Uniform Selection chain for Example~2.}
\label{exsimfig}
\end{figure}

\section{The Jump Chain}

Due to the problems with the Uniform Selection Algorithm identified above,
we instead turn attention to a more promising avenue, the Jump Chain.
Our definitions are as follows.

Let $\{X_n\}$ be an irreducible Markov chain on a state space $S$
(the ``original chain'').  For ease of exposition we initially
assume that $S$ is
finite or countable, though we later (Theorem~\ref{contthm})
extend this to general Markov chains with densities.
To avoid trivialities, we assume throughout that $|S|>1$.

Given a run $\{X_n\}$ of the Markov chain, we define the {\it Jump
Chain} $\{J_k\}$ to be the same chain except omitting any immediately repeated
states, and the {\it Multiplicity List} $\{M_k\}$ to count the number of
times the original chain remains at the same state.
For example, if the original chain $\{X_n\}$ began
$$
\{X_n\} = ( a, b, b, b, a, a, c, c, c, c, d, d, a, \ldots )
\, ,
$$
then the jump chain $\{J_k\}$ would begin
$$
\{J_k\} = ( a, b, a, c, d, a, \ldots )
\, ,
$$
and the corresponding multiplicity list $\{M_k\}$ would begin
$$
\{M_k\} = ( 1, 3, 2, 4, 2, \ldots )
\, .
$$
The concept of jump chains arises frequently for Markov processes,
especially for continuous-time processes where they are often defined
in terms of infinitesimal generators; see e.g.\
Section~4.4 of \cite{durrett} or
Proposition 4.4.20 of \cite{spbook}.
Here we develop the essential properties that we will use below.
Most of these properties are already known in the context of
(reversible) Metropolis-Hastings algorithms; see
Remark~\ref{jumprefremark} below.

To continue, let
$$
P(y|x)
\ = \
\P[X_{n+1} = y \, | \, X_n=x]
\, ,
\quad x,y\in S
$$
be the transition probabilities for the original chain $\{X_n\}$.  And, let
\begin{equation}\label{alpha}
\alpha(x)
\ = \ \P[X_{n+1} \not= x \, | \, X_n=x]
\ = \ \sum_{y \not= x} P(y|x)
\ = \ 1 - P(x|x)
\end{equation}
be the ``escape''
probability that the original chain will move away from $x$ on the next step.
Note that since the chain is irreducible and $|S|>1$, we must have
$\alpha(x)>0$ for all $x\in S$.
We then verify the following properties of the jump chain.

\begin{proposition}
The jump chain $\{J_k\}$ is itself a Markov chain, with
transition probabilities $\Phat(y|x)$
specified by $\Phat(x|x)=0$, and for $y\not=x$,
\begin{equation}\label{jumptrans}
\Phat(y|x)
\ := \ \P[J_{k+1} = y \, | \, J_k=x]
\ = \ { P(y|x) \over \sum_{z \not= x} P(z|x) }
\ = \ { P(y|x) \over \alpha(x) }
\, .
\end{equation}
\end{proposition}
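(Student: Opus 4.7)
The plan is to realise the jump chain as the original chain sampled at the successive ``first-exit'' times, and then to use the (strong) Markov property of $\{X_n\}$ at these stopping times.

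First, I would formally define the jump times recursively by $T_0 = 0$ and $T_{k+1} = \min\{n > T_k : X_n \neq X_{T_k}\}$, so that $J_k = X_{T_k}$ and $M_k = T_k - T_{k-1}$. Each $T_k$ is a stopping time with respect to the natural filtration of $\{X_n\}$, and since $\alpha(x) > 0$ for every $x$, each $T_k$ is almost surely finite. The representation $J_k = X_{T_k}$ is the key bridge between the informal ``omit repeats'' definition and a workable formula.

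Next, for the Markov property, I would condition on $\{J_0, \ldots, J_k\} = (x_0, \ldots, x_k)$ together with the associated multiplicities. This event is measurable with respect to $\mathcal{F}_{T_k}$, the $\sigma$-algebra generated by the chain up to time $T_k$. By the strong Markov property applied at the stopping time $T_k$, the post-$T_k$ process $(X_{T_k}, X_{T_k+1}, \ldots)$ is, conditional on $X_{T_k} = x_k$, a Markov chain starting at $x_k$ with transitions $P$, independent of $\mathcal{F}_{T_k}$. In particular, $J_{k+1} = X_{T_{k+1}}$ depends on the past only through $J_k = x_k$, which is exactly the Markov property for $\{J_k\}$.

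To compute $\widehat{P}(y|x)$ for $y \neq x$, I would decompose according to the length $M_{k+1}$ of the run of $x$'s following time $T_k$. Conditional on $X_{T_k} = x$, the event $\{M_{k+1} = m, \, X_{T_{k+1}} = y\}$ requires $m-1$ self-loops at $x$ followed by a single transition to $y$, so it has probability $P(x|x)^{m-1} P(y|x)$. Summing the geometric series over $m \geq 1$ gives
\begin{equation*}
\widehat{P}(y|x) \ = \ \sum_{m=1}^{\infty} P(x|x)^{m-1} P(y|x) \ = \ \frac{P(y|x)}{1 - P(x|x)} \ = \ \frac{P(y|x)}{\alpha(x)},
\end{equation*}
and $\widehat{P}(x|x) = 0$ by the definition of $T_{k+1}$ as the first time of a genuine change of state.

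I do not anticipate a serious obstacle here: the content of the proposition is essentially a direct application of the strong Markov property plus a geometric-series computation. The one point requiring mild care is the rigorous invocation of the strong Markov property at $T_k$ together with checking that $T_k < \infty$ almost surely, which rests on $\alpha(x) > 0$ and hence on the assumption that $\{X_n\}$ is irreducible with $|S| > 1$.
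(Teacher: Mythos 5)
Your proof is correct, but it takes a genuinely different route from the paper's. The paper's proof is a single-step conditioning: it identifies $\Phat(y|x) = \P[J_{k+1}=y \mid J_k=x]$ with $\P[X_{n+1}=y \mid X_n=x,\ X_{n+1}\not=X_n]$ and then applies the definition of conditional probability to get $P(y|x) \bigm/ \sum_{z\not=x}P(z|x)$ in one line. You instead realise $J_k = X_{T_k}$ at the first-exit stopping times, invoke the strong Markov property at $T_k$ to establish that $\{J_k\}$ is Markov, and obtain the transition formula by summing the geometric series $\sum_{m\ge 1} P(x|x)^{m-1}P(y|x) = P(y|x)/(1-P(x|x))$ over the holding time. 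What your approach buys is rigour on the point the paper glosses over: the paper never really justifies why $\{J_k\}$ is a Markov chain, nor why conditioning on $J_k=x$ (which encodes an entire random-length excursion) can be replaced by conditioning on a single transition of $\{X_n\}$; both facts are exactly what your strong-Markov-property argument supplies, and your geometric-sum computation makes explicit that the holding time at $x$ is irrelevant to the exit distribution. What the paper's approach buys is brevity. The only blemish in your write-up is a cosmetic indexing mismatch: with $T_0=0$ and $J_k=X_{T_k}$, the multiplicity of $J_k$ is $T_{k+1}-T_k$, so your $M_k = T_k - T_{k-1}$ is off by one relative to your own later usage of $M_{k+1}$ as the run following $T_k$; this does not affect the argument.
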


\begin{proof}
It follows from the definition of $\{J_k\}$ that $\Phat(x|x)=0$.
For $x,y\in S$ with $y\not=x$, we compute that
$$
\Phat(y|x)
\ = \ \P[J_{k+1} = y \, \bigm| \, J_k=x]
\ = \ \P[X_{n+1} = y \, \bigm| \, X_n=x, \ X_{n+1}\not=X_n]
$$
$$
\ = \ { \P[X_{n+1} = y, \ X_{n+1}\not=X_n \, \bigm| \, X_n=x]
  \over \P[X_{n+1}\not=X_n \, \bigm| \, X_n=x] }
\ = \ { P(y|x)
  \over \sum_{z\not=x} P(z|x) }
\, ,
$$
as claimed.
\end{proof}

\begin{proposition}\label{multmeanprop}
The conditional distribution of $M_k$ given $J_k$ is equal to the
distribution of $1+G$ where $G$ is a
geometric random variable with success probability $p=\alpha(J_k)$,
i.e.\
\begin{equation}\label{multprobs}
\P[M_k=m \, \bigm| \, J_k]
= (1-p)^{m-1} p
= (1-\alpha(J_k))^{m-1} \alpha(J_k)
\, ,
\quad m=1,2,\ldots
\, ,
\end{equation}
and furthermore $\E[M_k \, | \, J_k] = 1/p = 1 / \alpha(J_k)$.
\end{proposition}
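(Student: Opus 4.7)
The plan is to derive the distribution of $M_k$ by exploiting the strong Markov property of the original chain $\{X_n\}$ at the time index where the $k$-th distinct state is first visited.

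First I would fix $J_k = x$ and let $T$ denote the (random) time index $n$ at which the original chain $\{X_n\}$ enters the $k$-th distinct state, so $X_T = x$ and this is the start of the ``$k$-th block'' of repeated values. Then $M_k$ is exactly the length of this block, i.e.\ $M_k = \min\{j \ge 1 : X_{T+j} \not= x\}$. By the strong Markov property applied at the stopping time $T$, conditionally on $X_T = x$ the future increments $X_{T+1}, X_{T+2}, \ldots$ form a Markov chain started at $x$ with the same transition kernel $P$, independent of the history. At each step after entering state $x$, the chain remains at $x$ with probability $P(x|x) = 1 - \alpha(x)$ and escapes with probability $\alpha(x)$, independently across steps.

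Next I would identify $M_k$ with the waiting time in a sequence of independent Bernoulli($\alpha(x)$) trials, where ``success'' means leaving $x$. Writing $M_k = 1 + G$ where $G$ counts the number of self-loops from $x$ before the first escape, $G$ is geometric on $\{0,1,2,\ldots\}$ with success probability $p = \alpha(x)$, so
\begin{equation*}
\P[M_k = m \mid J_k = x] = \P[G = m-1] = (1-\alpha(x))^{m-1}\, \alpha(x), \quad m = 1,2,\ldots,
\end{equation*}
which is exactly \eqref{multprobs}.

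Finally, the conditional expectation follows from the standard geometric formula: given $J_k = x$,
\begin{equation*}
\E[M_k \mid J_k = x] = \sum_{m=1}^{\infty} m\, (1-p)^{m-1} p = \frac{1}{p} = \frac{1}{\alpha(x)},
\end{equation*}
and averaging over $J_k$ gives $\E[M_k \mid J_k] = 1/\alpha(J_k)$. The only real subtlety is justifying the conditioning step cleanly: one must argue that conditioning on $J_k$ alone (and not on the longer history of the chain) still leaves the post-$T$ trajectory with its unconditional transition law at $x$, which is precisely what the strong Markov property at the entry time $T$ delivers. Everything else is routine identification with a shifted geometric distribution.
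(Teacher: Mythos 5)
Your proof is correct and follows essentially the same route as the paper: the paper likewise argues that once the original chain enters state $x$ it stays with probability $1-\alpha(x)$ and leaves with probability $\alpha(x)$ independently at each step, so the block length is a shifted geometric. Your version merely makes explicit the strong Markov property at the entry time $T$, which the paper's shorter proof leaves implicit.
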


\begin{proof}
If the original chain is at state $x$, then it has probability
$p=\alpha(x)$ of leaving $x$ on the next step, or probability
$1-\alpha(x)$ of remaining at $x$.  Hence, the probability that it will
remain at $x$ for $m$ steps total (i.e., $m-1$ additional steps),
and then leave at the next step, is equal to $(1-p)^{m-1} p$, as claimed.
\end{proof}

% We can also think about the jump chain as follows.
% 
% \begin{proposition}
% Let $P(y|x)$ be the transition probabilities for a Markov chain on a state
% space $S$, let $\alpha(x) = \sum_{y\not=x} P(y|x) = 1 - P(x|x)$
% be the escape probabilities,
% assume that $\alpha(x)>0$ (i.e.\ $P(x|x)<1$) for all $x\in S$,
% and let $\Phat(y|x) = [{P(y|x) / \alpha(x)}] \, \one_{y\not= x}$ be
% the jump chain transition probabilities.  Then:

\begin{proposition}\label{jumpirredprop}
If the original chain $P$ is irreducible, then so is the jump chain $\Phat$.
\end{proposition}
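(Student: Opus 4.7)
The plan is to lift paths of positive probability from the original chain to the jump chain by collapsing consecutive repetitions. Concretely, fix $x,y \in S$ with $x \neq y$; I want to exhibit some $k \geq 1$ with $\widehat{P}^k(y|x) > 0$.

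First I would invoke irreducibility of $P$ to obtain $n \geq 1$ and a sequence $x = x_0, x_1, \ldots, x_n = y$ such that $P(x_{i+1}|x_i) > 0$ for every $i$. From this sequence I would read off the subsequence of distinct consecutive states: define $i_0 = 0$ and inductively $i_{j+1} = \min\{i > i_j : x_i \neq x_{i_j}\}$, stopping once $x_{i_j} = y$. This yields a reduced path $x = x_{i_0}, x_{i_1}, \ldots, x_{i_k} = y$ in which consecutive entries differ. For each transition of the reduced path, $P(x_{i_{j+1}}|x_{i_j}) > 0$ (since that transition appears in the original path), so by the formula \eqref{jumptrans} and the already-noted fact that $\alpha(z) > 0$ for every $z \in S$, we get $\widehat{P}(x_{i_{j+1}}|x_{i_j}) = P(x_{i_{j+1}}|x_{i_j})/\alpha(x_{i_j}) > 0$. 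Multiplying these out along the reduced path gives $\widehat{P}^k(y|x) > 0$.

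Finally I would briefly address the case $y = x$ (needed if one's definition of irreducibility demands the chain return to its starting state, keeping in mind that $\widehat{P}(x|x) = 0$). Since $|S| > 1$, pick any $z \neq x$; by the above, there exist positive integers $k_1, k_2$ with $\widehat{P}^{k_1}(z|x) > 0$ and $\widehat{P}^{k_2}(x|z) > 0$, whence $\widehat{P}^{k_1 + k_2}(x|x) > 0$.

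There is no real obstacle here beyond the bookkeeping of the collapsing step; the substance of the argument is the observation that the formula \eqref{jumptrans} preserves positivity of transitions because $\alpha(x) > 0$ for all $x$, which in turn is exactly the point where $|S| > 1$ combined with irreducibility of $P$ is used.
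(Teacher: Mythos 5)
Your proposal is correct and follows essentially the same route as the paper: take a positive-probability path for $P$, reduce it so that consecutive states differ (the paper phrases this as ``without loss of generality the $x_i$ are all distinct''), and observe that \eqref{jumptrans} together with $\alpha(x)>0$ turns each positive $P$-transition between distinct states into a positive $\Phat$-transition. Your explicit collapsing construction and the remark about returns to the starting state are just more careful bookkeeping of the same argument.
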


\begin{proof}
Let $x,y\in S$.  Since $P$ is irreducible, there is a path
$x=x_0,x_1,x_2,\ldots,x_m=y$ with $P(x_{i+1}|x_i)>0$ for all $i$.
Without loss of generality, we can assume the $\{x_i\}$ are all distinct.
But if $P(x_{i+1}|x_i)>0$, then~\eqref{jumptrans} implies that also
$\Phat(x_{i+1}|x_i)>0$.  Hence, $\Phat$ is also irreducible.
\end{proof}

\begin{proposition}\label{jumpstatprop}
If the original chain $P$ has stationary distribution $\pi$,
then the jump chain $\Phat$ has stationary distribution $\pihat$ given by
$\pihat(x) = c \, \alpha(x) \, \pi(x)$ where
$c = \Big( \sum_y \alpha(y) \, \pi(y) \Big)^{-1}$.
\end{proposition}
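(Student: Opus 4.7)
The plan is to verify directly that the proposed $\widehat{\pi}$ satisfies the stationarity equation $\sum_{x} \widehat{\pi}(x)\,\widehat{P}(y|x) = \widehat{\pi}(y)$ for every $y \in S$; together with the observation that the normalizing constant $c$ makes $\widehat{\pi}$ a probability distribution, this is enough.

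First I would substitute the definitions. Because $\widehat{P}(x|x) = 0$, the sum collapses to $x \neq y$, and on that range the factor $\alpha(x)$ in $\widehat{\pi}(x) = c\,\alpha(x)\,\pi(x)$ cancels exactly with the denominator in $\widehat{P}(y|x) = P(y|x)/\alpha(x)$ given by \eqref{jumptrans}. So the left-hand side reduces to
$$
c \sum_{x \neq y} \pi(x)\, P(y|x).
$$
Note that division by $\alpha(x)$ is legitimate because Proposition~\ref{jumpirredprop}'s hypothesis (irreducibility together with $|S|>1$) guarantees $\alpha(x)>0$ for all $x$, as already observed below \eqref{alpha}.

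Next I would invoke the stationarity of $\pi$ under $P$ to rewrite $\sum_{x \neq y} \pi(x)\,P(y|x) = \pi(y) - \pi(y)\,P(y|y) = \pi(y)\,(1-P(y|y)) = \pi(y)\,\alpha(y)$, using \eqref{alpha} in the last step. Combining, the left-hand side equals $c\,\alpha(y)\,\pi(y) = \widehat{\pi}(y)$, completing the stationarity check.

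Finally I would verify that $\widehat{\pi}$ is indeed a well-defined probability distribution. Since $0 < \alpha(y) \le 1$ for all $y$, the sum $\sum_y \alpha(y)\,\pi(y)$ lies in $(0,1]$, so $c$ is finite and positive, and $\sum_y \widehat{\pi}(y) = c \sum_y \alpha(y)\,\pi(y) = 1$ by construction. There is no real obstacle here; the argument is essentially a one-line cancellation, and the only subtle point is being careful to exclude the $x=y$ term (whose contribution is zero under $\widehat{P}$ but nonzero under $P$) when applying stationarity of the original chain.
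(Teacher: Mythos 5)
Your proof is correct and follows essentially the same route as the paper: cancel the $\alpha(x)$ factors, reduce to $c\sum_{x\neq y}\pi(x)\,P(y|x)$, and apply stationarity of $\pi$ under $P$ after separating the $x=y$ term. Your added remarks on the positivity of $\alpha$ and the normalization of $\widehat{\pi}$ are fine but not part of the paper's argument, which stops at verifying the stationarity equation.
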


\begin{proof}
Recall that on a discrete space,
$\pi$ is stationary for $P$ if and only if
$\sum_x \pi(x) \, P(y|x) = \pi(y)$ for all $y\in S$.
In that case, we compute that
$$
\sum_x \pihat(x) \, \Phat(y|x)
\ = \ \sum_x \left( c \, \alpha(x) \, \pi(x) \right)
\left( [{P(y|x) / \alpha(x)}] \, \one_{y\not= x} \right)
$$
$$
\ = \ c \, \sum_{x\not=y} \pi(x) \, P(y|x)
\ = \ c \, \left( \sum_{x} \pi(x) \, P(y|x) \right) - c \, \pi(y) \, P(y|y)
$$
$$
\ = \ c \, \pi(y) - c \, \pi(y) \, P(y|y)
\ = \ c \, \pi(y) [1 - P(y|y)]
\ = \ c \, \pi(y) \, \alpha(y)
\ = \ \pihat(y)
\, ,
$$
so that $\pihat$ is stationary for $\Phat$, as claimed.
\end{proof}

\begin{remark}\rm\label{jumprefremark}
Most of the results presented in this section are already known
in the Metropolis-Hastings (reversible) context:
the geometric distribution of the holding times is noted in
Lemma~1(3) of \cite{douc} and
Proposition~1(a) of \cite{iliopoulos};
the modified transition probabilities of the jump chain are stated in
Proposition~1(b) of \cite{iliopoulos};
and the relationship between the stationary distributions of the original
and jump chains is used in
Lemma~1(4) of \cite{douc},
Proposition~1(c) of \cite{iliopoulos}
(see also Proposition~2.1 of \cite{malefaki}),
Lemma~1 of \cite{doucet},
and Section~2 of \cite{deligiannidis}.
\end{remark}

\begin{remark}\rm
It is common that simple modifications of {\it reversible} chains lead to
simple modifications of their stationary distributions.  For example, if a
reversible chain is restricted to a subset of the state space (so any moves
out of the subset are rejected with the chain staying where it is), then its
stationary distribution is equal to the original
stationary distribution conditional on being in that subset (since
the detailed balance equation still holds on the subset).  However,
that property does not hold without reversibility.
For a simple counter-example, let $S=\{1,2,3\}$,
with $P(2|1) = P(3|2) = P(1|3) = 3/4$,
and $P(3|1) = P(1|2) = P(2|3) = 1/4$.
Then if $C=\{1,2\}$, then the stationary distribution of the original
chain is $(1/3,1/3,1/3)$, but the stationary distribution of
the chain restricted to $C$ is $(1/4,3/4)$.
We were thus surprised that Proposition~\ref{jumpstatprop}
holds even for non-reversible chains.
\end{remark}

\section{Using the Jump Chain for Estimation}

The Jump Chain can be used for estimation, as we now discuss.
This approach has also been taken by others;
see Remarks~\ref{estrefremark} and~\ref{foldrefremark} below.

% (For now, for simplicity, I assume the state space $S$ is finite, but I do
% not think that is required.)

\begin{theorem}\label{mainone}
Given an irreducible Markov chain $\{X_n\}$ with transition probabilities
$P(y|x)$ and stationary distribution $\pi$ on a state space $S$,
and a function $h:S\to\IR$, suppose we
simulate the jump chain $\{J_k\}$ with the
transition probabilities~\eqref{jumptrans}, and then simulate the multiplicities
list $\{M_k\}$ from the conditional probabilities~\eqref{multprobs}
where $p=\alpha(J_k)$ with $\alpha$ as in~\eqref{alpha},
and set 
\begin{equation}\label{ebareqn}
\ebar_L \ = \ {\sum_{k=1}^L M_k \, h(J_k) \over \sum_{k=1}^L M_k}
\, .
\end{equation}
Then $\ebar_L$ is a consistent estimator of the expected value
$\E_\pi(h)$, i.e.\ $\liml_{L\to\infty} \ebar_L = \E_\pi(h)$ w.p.~1.
\end{theorem}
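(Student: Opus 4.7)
The plan is to write $\ebar_L$ as a ratio of two Markov-chain ergodic averages and use the explicit form of $\pihat$ from Proposition~\ref{jumpstatprop} to identify the two limits. First I would observe that the pair $\{(J_k, M_k)\}$ is itself a Markov chain on $S \times \{1,2,\ldots\}$: given $J_k$, the next value $J_{k+1}$ is drawn from $\Phat(\cdot\mid J_k)$ independently of $M_k$, and then $M_{k+1}$ is a fresh Geometric draw with success parameter $\alpha(J_{k+1})$. Irreducibility of this extended chain inherits from Proposition~\ref{jumpirredprop} together with the fact that the Geometric distribution has full support on $\{1,2,\ldots\}$, and by Proposition~\ref{jumpstatprop} its stationary distribution is the probability measure $\tilde\pi(x,m) := \pihat(x)\,(1-\alpha(x))^{m-1}\alpha(x)$ (finiteness of $c = 1/\sum_y \alpha(y)\pi(y)$ follows from $\alpha(y)>0$ and $\sum_y \pi(y)=1$).

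Next I would apply the standard Markov chain SLLN (e.g.\ \cite[Theorem~17.0.1]{MT}) separately to the two functions $f_1(x,m) := m\,h(x)$ and $f_2(x,m) := m$ on the extended chain. Using $\E[M_k\mid J_k=x] = 1/\alpha(x)$ from Proposition~\ref{multmeanprop} together with $\pihat(x) = c\,\alpha(x)\,\pi(x)$, the factor $\alpha(x)$ in $\pihat(x)$ exactly cancels the $1/\alpha(x)$ from the geometric mean, yielding
\[
\E_{\tilde\pi}[f_2] \ = \ \suml_x \pihat(x)/\alpha(x) \ = \ c\,\suml_x \pi(x) \ = \ c,
\]
\[
\E_{\tilde\pi}[f_1] \ = \ \suml_x \pihat(x)\,h(x)/\alpha(x) \ = \ c\,\suml_x \pi(x)\,h(x) \ = \ c\,\E_\pi(h).
\]
The same computation with $|h|$ in place of $h$ verifies the $L^1$ hypothesis needed for the SLLN, using that $\E_\pi(h)$ is assumed finite. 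Therefore $(1/L)\suml_{k=1}^L M_k\,h(J_k) \to c\,\E_\pi(h)$ and $(1/L)\suml_{k=1}^L M_k \to c$ almost surely, and since $c>0$ the ratio $\ebar_L$ converges almost surely to $\E_\pi(h)$.

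The main point requiring care is the $\alpha$-cancellation bookkeeping and the distinction between the jump-chain stationary measure $\pihat$ and the original target $\pi$: naively one might worry that simulating $\{J_k\}$ gives samples from the ``wrong'' distribution. There is however no real analytic obstacle, because the multiplicity $M_k$ acts as an unbiased importance weight $1/\alpha(J_k)$ that precisely undoes the $\alpha$-reweighting built into $\pihat$, which is exactly the relationship encoded by Proposition~\ref{jumpstatprop}. A minor alternative one could also consider is to apply the SLLN directly to the jump chain $\{J_k\}$ with functions $h(x)/\alpha(x)$ and $1/\alpha(x)$, and then absorb the conditionally independent geometric noise in the $M_k$ via a separate conditional LLN argument; this gives the same limits but is slightly less clean than working with the extended chain.
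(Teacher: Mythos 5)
Your proof is correct, but it takes a genuinely different route from the paper's. The paper's argument is a two-line reconstruction: since the pair $(J_k,M_k)$ simulated as in the theorem has the same joint law as the jump decomposition of an actual run of $\{X_n\}$, the weighted sum $\sum_{k=1}^L M_k\,h(J_k)$ is literally the ordinary ergodic sum $\sum_{n=1}^{K(L)} h(X_n)$ of the reconstructed original chain, where $K(L)=\sum_{k=1}^L M_k$; thus $\ebar_L=\ehat_{K(L)}$, and since each $M_k\ge 1$ forces $K(L)\to\infty$, consistency transfers directly from the usual estimator $\ehat_K$ along the random subsequence. You instead treat $\{(J_k,M_k)\}$ as a Markov chain in its own right, identify its stationary law $\pihat(x)(1-\alpha(x))^{m-1}\alpha(x)$, and apply the SLLN separately to $m\,h(x)$ and $m$, using the cancellation $\pihat(x)/\alpha(x)=c\,\pi(x)$. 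Both work; the paper's version is shorter and requires no ergodic theory for the jump chain at all (it piggybacks entirely on the SLLN for $\{X_n\}$), while yours is more self-contained on the jump-chain side, makes the importance-weighting mechanism explicit, and runs in exact parallel with the proof of Theorem~\ref{maintwo} (where $M_k$ is replaced by its conditional mean $1/\alpha(J_k)$). One small imprecision: when $\alpha(y)=1$ for some $y$ the Geometric law degenerates to a point mass at $m=1$, so the extended chain is not irreducible on all of $S\times\{1,2,\ldots\}$; you should restrict to the closed communicating class supporting $\tilde\pi$ (or to the support of $\tilde\pi$), which is where the SLLN applies. This does not affect the conclusion.
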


\begin{proof}
Recall (e.g.~\cite{MT}) that the usual estimator
$\ehat_K = {1 \over K} \sum_{n=1}^K h(X_n)$
is consistent, i.e.\ $\lim_{K\to\infty} \ehat_K = \E_\pi(h)$ w.p.~1.
Then, it is seen that
$$
\ebar_L
\ = \ {\sum_{k=1}^L M_k \, h(J_k) \over \sum_{k=1}^L M_k}
\ = \ \ehat_{\sum_{k=1}^L M_k}
\ = \ \ehat_{K(L)}
$$
where $K(L) = \sum_{k=1}^L M_k$.
Since each $M_k \ge 1$, $\lim_{L\to\infty} K(L) = \infty$, so
$\lim_{L\to\infty} \ebar_L
= \lim_{L\to\infty} \ehat_{K(L)}
= \lim_{K\to\infty} \ehat_K
= \E_\pi(h)$ w.p.~1,
as claimed.
\end{proof}

\begin{remark}\rm\label{estrefremark}
The consistency of the estimate~\eqref{ebareqn},
and similarly those of Theorems~\ref{maintwo} and~\ref{contthm} below,
is already known in the Metropolis-Hastings (reversible) context;
see equation~(3) of \cite{malefaki},
Section~2 of \cite{douc},
and equation~(2) of \cite{iliopoulos}.
\end{remark}

% Hence, at least if $S$ is finite (so that $\alpha(x)$ is bounded away
% from~0, so the $\{M_n\}$ are bounded in probability), then we have

On the other hand, combining the Markov chain Law of Large Numbers 
with Propositions~\ref{jumpirredprop} and~\ref{jumpstatprop}
immediately gives:

\begin{proposition}\label{ISprop}
Under the above assumptions,
if we simulate the jump chain $\{J_k\}$ with the
transition probabilities~$\Phat$, then for any
function $g:S\to\IR$ with $\pihat|g|<\infty$, we have
$$
\lim_{L\to\infty} {1 \over L} \sum_{k=1}^L g(J_k)
\ = \ \pihat(g)
\ := \ \sum_{x\in S} g(x) \, \pihat(x)
\ = \ c \, \sum_{x\in S} g(x) \, \alpha(x) \, \pi(x)
\ \ w.p.~1.
$$
\end{proposition}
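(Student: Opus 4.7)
The plan is to apply the standard Strong Law of Large Numbers for Markov chains, only now applied to the jump chain $\{J_k\}$ rather than the original chain $\{X_n\}$. Essentially all of the required hypotheses have already been verified in the preceding propositions, so the proof should be a short deduction rather than new technical work.

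First, I would observe that Proposition~\ref{jumpirredprop} establishes that $\Phat$ is irreducible on $S$, and Proposition~\ref{jumpstatprop} shows that $\pihat(x) = c\,\alpha(x)\,\pi(x)$ is a stationary probability distribution for $\Phat$, where the normalising constant $c = \bigl(\sum_y \alpha(y)\,\pi(y)\bigr)^{-1}$ is finite and positive because $0 < \alpha(y) \le 1$ for all $y$ and $\pi$ itself is a probability distribution. Thus $\{J_k\}$ is an irreducible Markov chain possessing a probability stationary distribution $\pihat$, and hence is positive recurrent on the discrete space $S$.

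I would then invoke the Markov chain SLLN (e.g.\ \cite[Theorem~17.0.1]{MT}) applied to $\{J_k\}$ with the function $g$: under the hypothesis $\pihat|g|<\infty$ that is built into the statement, this gives w.p.~1 that
$$
\liml_{L\to\infty} {1 \over L} \suml_{k=1}^L g(J_k)
\ = \ \pihat(g)
\ = \ \suml_{x\in S} g(x)\,\pihat(x).
$$
Substituting the explicit form $\pihat(x) = c\,\alpha(x)\,\pi(x)$ from Proposition~\ref{jumpstatprop} then rewrites the right-hand side as $c\,\sum_{x\in S} g(x)\,\alpha(x)\,\pi(x)$, which is exactly the claim.

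The only thing that might look like an obstacle is verifying that the classical SLLN applies in the form stated, but on a discrete state space, irreducibility together with the existence of a probability stationary distribution automatically yields positive recurrence, and the integrability condition on $g$ is supplied as a hypothesis. So nothing beyond citing the standard SLLN and plugging in the two previous propositions is required.
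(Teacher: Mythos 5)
Your proof is correct and follows exactly the route the paper takes: the paper derives this proposition by ``combining the Markov chain Law of Large Numbers with Propositions~\ref{jumpirredprop} and~\ref{jumpstatprop}'', which is precisely your argument. Your additional remarks on positivity of $c$ and positive recurrence only make the deduction more explicit than the paper's one-line justification.
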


% In particular, setting $g(x) = h(x) / c \, \alpha(x)$ as in importance
% sampling gives:

\begin{corollary}\label{withccor}
Under the above assumptions,
if we simulate the jump chain $\{J_k\}$ with the
transition probabilities~$\Phat$, then for any
function $h:S\to\IR$ with $\pi|h|<\infty$, we have
$$
\lim_{L\to\infty} {1 \over c \, L} \sum_{k=1}^L [h(J_k) / \alpha(J_k)]
\ = \ \pi(h)
\ := \ \sum_{x\in S} h(x) \, \pi(x)
\, ,
\ \ w.p.~1.
$$
\end{corollary}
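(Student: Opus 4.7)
The plan is to apply Proposition~\ref{ISprop} to the specific function $g(x) := h(x)/\alpha(x)$, and observe that the $\alpha$ factors will cancel against those appearing in $\pihat$, leaving exactly $\pi(h)$ (up to the constant $c$).

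First, I would verify that $g$ satisfies the integrability hypothesis of Proposition~\ref{ISprop}, i.e.\ $\pihat|g|<\infty$. By Proposition~\ref{jumpstatprop}, $\pihat(x) = c\,\alpha(x)\,\pi(x)$, so
$$
\pihat|g|
\ = \ \sum_{x\in S} c\,\alpha(x)\,\pi(x) \cdot {|h(x)| \over \alpha(x)}
\ = \ c \, \pi|h|
\ < \ \infty
$$
by hypothesis. (Note the $\alpha(x)>0$ observation after~\eqref{alpha} ensures $g$ is well defined.)

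Next, I would invoke Proposition~\ref{ISprop} with this $g$ to conclude
$$
\lim_{L\to\infty} {1 \over L} \sum_{k=1}^L {h(J_k) \over \alpha(J_k)}
\ = \ \pihat(g)
\ = \ c \sum_{x\in S} {h(x) \over \alpha(x)} \, \alpha(x) \, \pi(x)
\ = \ c \, \pi(h)
\qquad \text{w.p.~1},
$$
where again the $\alpha(x)$ cancel. Dividing through by $c$ then yields the claim.

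There is no real obstacle here: the corollary is essentially an algebraic rearrangement of Proposition~\ref{ISprop}, with the choice $g=h/\alpha$ designed precisely to invert the reweighting that turned $\pi$ into $\pihat$. The only thing worth flagging is that this is the usual importance-sampling-style trick: running the jump chain (which targets $\pihat\propto\alpha\pi$) and reweighting by $1/\alpha$ recovers expectations under the original $\pi$.
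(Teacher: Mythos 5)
Your proof is correct and follows essentially the same route as the paper: both reduce the claim to Proposition~\ref{ISprop} with $g$ proportional to $h/\alpha$ so that the $\alpha$ factors in $\pihat$ cancel; the paper simply absorbs the constant $c$ into $g$ (taking $g = h/c\,\alpha$) rather than dividing it out at the end as you do. The integrability check is the same in both.
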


\begin{proof}
Let $g(x) = h(x) / c \, \alpha(x)$.  Then since
$\pi|h|<\infty$, we have
$$
\pihat|g|
\ = \ \sum_x |g(x)| \, \pihat(x)
\ = \ \sum_x [|h(x)| / c \, \alpha(x)] \, c \, \alpha(x) \, \pi(x)
\ = \ \sum_x |h(x)| \, \pi(x)
\ = \ \pi|h|
\ < \ \infty
\, .
$$
So, the result follows upon plugging this $g$ into
Proposition~\ref{ISprop}.
\end{proof}

% Then, since setting $g \equiv 1$ picks out the normalising constants, we have

% In particular, setting $h \equiv 1$ gives:
% 
% \begin{corollary}\label{gonecor}
% Under the above assumptions,
% if we simulate the jump chain $\{J_n\}$ with the
% transition probabilities~$\Phat$, then
% $$
% \lim_{L\to\infty} {1 \over c \, L} \sum_{n=1}^L [1 / \alpha(J_n)]
% \ = \ 1
% \, ,
% \ \ w.p.~1.
% $$
% \end{corollary}
% 
% Then, combining Corollaries~\ref{withccor} and~\ref{gonecor} gives:

We then have:

\begin{theorem}\label{maintwo}
Under the above assumptions,
if we simulate the jump chain $\{J_k\}$ with the
transition probabilities~$\Phat$, then for any
function $h:S\to\IR$ with $\pi|h|<\infty$, we have
\begin{equation}\label{ealphaeqn}
\lim_{L\to\infty}
{\sum_{k=1}^L [h(J_k) / \alpha(J_k)]
\over \sum_{k=1}^L [1 / \alpha(J_k)]}
\ = \ \pi(h)
\, ,
\ \ w.p.~1.
\end{equation}
\end{theorem}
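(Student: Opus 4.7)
The plan is to derive Theorem~\ref{maintwo} as a direct consequence of Corollary~\ref{withccor}, applied separately to the numerator and denominator of the ratio on the left-hand side of~\eqref{ealphaeqn}, and then combining via the standard fact that the limit of a ratio is the ratio of the limits (provided the denominator limit is nonzero).

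First, I would apply Corollary~\ref{withccor} with the given function $h$. Since $\pi|h|<\infty$ by hypothesis, the corollary gives
\[
\lim_{L\to\infty} {1 \over c\,L} \sum_{k=1}^L [h(J_k)/\alpha(J_k)] \ = \ \pi(h) \quad \text{w.p.\ 1}.
\]
Next I would apply the same corollary with $h$ replaced by the constant function $\one$. This is legitimate because $\pi|\one| = 1 < \infty$, yielding
\[
\lim_{L\to\infty} {1 \over c\,L} \sum_{k=1}^L [1/\alpha(J_k)] \ = \ \pi(\one) \ = \ 1 \quad \text{w.p.\ 1}.
\]

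Finally I would divide these two limits. Since both statements hold simultaneously on a common probability-one event, and the denominator limit is $1 \ne 0$, the ratio of the two sums (in which the common factor $cL$ cancels) converges w.p.~1 to $\pi(h)/1 = \pi(h)$, which is precisely~\eqref{ealphaeqn}.

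There is no real obstacle here: the only thing to notice is that the normalising constant $c$ in Corollary~\ref{withccor} is the same in both applications (it depends only on $\pi$ and $\alpha$, not on $h$), so it cancels cleanly; and that the denominator limit is nonzero so the ratio of limits is well-defined. One minor technical remark I would include is that each $\alpha(J_k)>0$ by the irreducibility assumption together with $|S|>1$ (already noted after equation~\eqref{alpha}), so the terms $1/\alpha(J_k)$ are well-defined throughout.
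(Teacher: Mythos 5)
Your proposal is correct and matches the paper's own argument: the paper likewise normalises numerator and denominator by $cL$, applies Corollary~\ref{withccor} with $h$ and with $h\equiv 1$ respectively, and takes the ratio of the two almost-sure limits. Your added remarks (that $c$ cancels and that the denominator limit is nonzero) are just explicit versions of steps the paper leaves implicit.
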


\begin{proof}
Setting $h \equiv 1$ in Corollary~\ref{withccor} gives that w.p.~1,
$\lim_{L\to\infty} {1 \over c \, L} \sum_{k=1}^L [1 / \alpha(J_k)]
\, = \, 1$.  We then compute that
$$
\lim_{L\to\infty}
{\sum_{k=1}^L [h(J_k) / \alpha(J_k)]
\over \sum_{k=1}^L [1 / \alpha(J_k)]}
\ = \
\lim_{L\to\infty}
{{1 \over cL} \sum_{k=1}^L [h(J_k) / \alpha(J_k)]
\over {1 \over cL} \sum_{k=1}^L [1 / \alpha(J_k)]}
\ = \
{\sum_{x\in S} h(x) \, \pi(x)
\over 1}
\ = \ \pi(h)
\, ,
$$
as claimed.
\end{proof}

Comparing Theorems~\ref{mainone} and~\ref{maintwo}, we see that they
coincide except that each multiplicity random variable $M_k$ has been
replaced by its mean $1/\alpha(J_k)$, cf.\ Proposition~\ref{multmeanprop}.

Finally, we note that although our computer hardware does not allow us to
exploit it, most of the above carries over to Markov chains with densities
on general (continuous) state spaces, as follows.  (The proofs are very
similar to the discrete case, and are thus omitted.)

\begin{theorem}\label{contthm}
Let $\X$ be a general state space, and $\mu$ an atomless
$\sigma$-finite reference
measure on $\X$.  Suppose a Markov chain on $\X$ has transition
probabilities $P(x,dy) = r(x) \, \delta_x(dy) +
\rho(x,y) \, \mu(dy)$ for some $r:\X\to[0,1]$ and
$\rho:\X\times\X\to[0,\infty)$ with $r(x) + \int \rho(x,y) \, \mu(dy) = 1$
for each $x\in\X$, where $\delta_x$ is a point-mass at $x$.
Again let $\Phat$ be the transitions for the corresponding jump chain
$\{J_k\}$ with multiplicities $\{M_k\}$.  Then:
\hfil\break(i)
$\Phat(x,\{x\})=0$, \
and for $x\not=y$, \
$\Phat(x,dy) = {\rho(x,y) \over \int \rho(x,z) \, \mu(dz)} \mu(dy)$.
\hfil\break(ii)
The conditional distribution of $M_k$ given $J_k$ is equal to the
distribution of $1+G$ where $G$ is a
geometric random variable with success probability $p=\alpha(J_k)$
where $\alpha(x) = \P[X_{n+1} \not= x \, | \, X_n=x]
= \int \rho(x,z) \, \mu(dz)
= 1 - r(x)
= 1 - P(x|x)$.
\hfil\break(iii)
If the original chain is $\phi$-irreducible (see e.g.\ \cite{MT})
for some positive $\sigma$-finite measure $\phi$ on $\X$,
then the jump chain is also $\phi$-irreducible for the same $\phi$.
\hfil\break(iv)
If the original chain has stationary distribution $\pi(x) \, \mu(dx)$,
then the jump chain has stationary distribution given by
$\pihat(x) = c \, \alpha(x) \, \pi(x) \, \mu(dx)$ where
$c^{-1} = \int \alpha(y) \, \pi(y) \, \mu(dy)$.
\hfil\break(v)
If $h:\X\to\IR$ has finite expectation, then with probability~1,
$$
\liml_{L\to\infty} {\sum_{k=1}^L M_k \, h(J_k) \over \sum_{k=1}^L M_k}
\ = \ \liml_{L\to\infty} {\sum_{k=1}^L [h(J_k) / \alpha(J_k)]
\over \sum_{k=1}^L [1 / \alpha(J_k)]}
\ = \ \pi(h)
% \ := \ \E_\pi(h)
\ := \ \int h(x) \, \pi(x) \, \mu(dx)
\, .
$$
% \ w.p.~1.
\end{theorem}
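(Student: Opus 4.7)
The plan is to mirror the discrete-case arguments in Propositions~\ref{multmeanprop}--\ref{jumpstatprop} and Theorems~\ref{mainone}--\ref{maintwo}, replacing sums over $S$ by integrals against $\mu$ and carefully decomposing $P(x,\cdot)$ into its atom at $x$ and its $\mu$-density part.

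For (i), I would condition on the event $\{X_{n+1}\ne x\}$, which has probability $1-r(x)=\int\rho(x,z)\,\mu(dz)=\alpha(x)$; since $\mu$ is atomless, the only atom of $P(x,\cdot)$ sits at $x$, so the conditional law is absolutely continuous with density $\rho(x,y)/\alpha(x)$ against $\mu$, giving the stated $\Phat$. For (ii), given $J_k=x$, successive steps of the original chain independently stay at $x$ with probability $1-\alpha(x)$ and leave with probability $\alpha(x)$, so $M_k$ is exactly $1+G$ for $G\sim$Geometric$(\alpha(x))$, exactly as in the proof of Proposition~\ref{multmeanprop}.

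For (iv), I would compute
$$\int \pihat(x)\,\Phat(x,A)\,\mu(dx) = c\int\alpha(x)\pi(x)\int_{A}\frac{\rho(x,y)}{\alpha(x)}\,\mu(dy)\,\mu(dx) = c\int_A\!\!\int \rho(x,y)\pi(x)\,\mu(dx)\,\mu(dy),$$
and then invoke stationarity of $\pi$ for $P$, which reads $\int_A\pi(y)\,\mu(dy)=\int_A r(x)\pi(x)\,\mu(dx)+\int\!\!\int_A\rho(x,y)\pi(x)\,\mu(dy)\,\mu(dx)$, i.e.\ $\int\!\!\int_A\rho(x,y)\pi(x)\,\mu(dx)\,\mu(dy)=\int_A\alpha(x)\pi(x)\,\mu(dx)$. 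Multiplying by $c$ yields $\pihat(A)$. For (iii), I would copy the path argument of Proposition~\ref{jumpirredprop}: if $\phi(A)>0$ then $\phi$-irreducibility of $P$ gives some $n$ and a set of ``intermediate'' $(x_1,\ldots,x_{n-1})$ on which successive $\rho$-densities are positive and the terminal step lies in $A$; collapsing repeated states (here, there are none, since the density part excludes $x_i=x_{i+1}$ up to a $\mu$-null set) produces a jump-chain path of positive probability from $x$ to $A$.

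For (v), the time-change argument of Theorem~\ref{mainone} carries over verbatim: $\sum_{k=1}^L M_k h(J_k)=\sum_{n=1}^{K(L)}h(X_n)$ with $K(L)=\sum_{k=1}^L M_k\to\infty$, so the first ratio tends to $\pi(h)$ by the SLLN for the original chain. The second equality follows as in Theorem~\ref{maintwo} by applying the SLLN for the $\phi$-irreducible jump chain with stationary distribution $\pihat(x)\,\mu(dx)=c\alpha(x)\pi(x)\,\mu(dx)$ to both $h/\alpha$ and $1/\alpha$, and dividing (using $c^{-1}=\int\alpha\pi\,d\mu$ to make the numerator land on $\pi(h)$).

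The main obstacle is (iii): $\phi$-irreducibility in general state spaces must be handled via measurable ``positive-density paths'' rather than the clean sequence of pointwise transitions available in the discrete case, so one must verify that the relevant $n$-step kernels of $\Phat$ can be bounded below by a product of $\rho/\alpha$ factors on a set of positive $\mu^{n}$-measure. Once that is in place, (iv) and (v) are essentially routine translations of the discrete proofs, which is why the paper elides them.
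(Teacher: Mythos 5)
Your proposal is correct and is exactly the argument the paper intends: the paper omits the proof of Theorem~\ref{contthm}, stating only that it is ``very similar to the discrete case,'' and your write-up is precisely that translation --- conditioning on $\{X_{n+1}\ne x\}$ for (i), the geometric holding-time argument for (ii), the integrated stationarity computation mirroring Proposition~\ref{jumpstatprop} for (iv), and the time-change and ratio arguments of Theorems~\ref{mainone} and~\ref{maintwo} for (v). Your flagged concern about (iii) is also apt, and your sketched fix (bounding the $n$-step jump kernel below by products of $\rho/\alpha$ on positive-$\mu^n$-measure sets, after discarding the pure-holding terms) is the right way to handle the only genuinely non-verbatim step.
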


\subsection{Application to the Metropolis Algorithm}

Suppose now that the original chain $\{X_n\}$ is a Metropolis algorithm,
with proposal probabilities $Q(y|x)$ which are symmetric
(i.e.\ $Q(y|x)=Q(x|y)$).
Then for $x\not=y$,
$P(y|x) = Q(y|x) \, \min\Big(1, \ {\pi(y) \over \pi(x)}\Big)$.
Hence, by~\eqref{jumptrans}, the jump chain transition probabilities
have $\Phat(x|x)=0$ and for $x\not=y$ are given by
\begin{equation}\label{metjumptrans}
\Phat(y|x)
\ := \ \P[J_1 = y \, | \, J_0=x]
\ = \ {  Q(y|x) \, \min\Big(1, \ {\pi(y) \over \pi(x)}\Big)
\over \sum_{z \not= x} Q(z|x) \, \min\Big(1, \ {\pi(z) \over \pi(x)}\Big) }
\, .
\end{equation}
Also, here
\begin{equation}\label{metalpha}
\alpha(x)
\ = \ \sum_{y \not= x} P(y|x)
\ = \ \sum_{y \not= x} Q(y|x) \, \min\Big(1, \ {\pi(y) \over \pi(x)} \Big)
\, .
\end{equation}

% Then we have:
% 
% \begin{theorem}\label{metcor}
% Given symmetric proposal probabilities $Q(y|x)$
% and target distribution $\pi$ on a state space $S$,
% and a function $h:S\to\IR$,
% suppose we simulate the jump chain $\{J_n\}$ with the
% transition probabilities~\eqref{metjumptrans},
% and then simulate the multiplicities
% list $\{M_n\}$ from the conditional probabilities~\eqref{multprobs}
% where $p=\alpha(J_n)$ with $\alpha$ as in~\eqref{metalpha},
% and set 
% $$
% \ebar_L \ = \ {\sum_{n=1}^L M_n \, h(J_n) \over \sum_{n=1}^L M_n}
% \, .
% $$
% Then $\ebar_L$ is a consistent estimator of the expected value
% $\E_\pi(h)$, i.e.\ $\liml_{L\to\infty} \ebar_L = \E_\pi(h)$.
% \end{theorem}

A special case is where the proposal probabilities $Q(x,\cdot)$
are {\it uniform} over all ``neighbours'' of $x$, where each state has the
same number $N$ of neighbours.  We assume that
$x$ is {\it not} a neighbour of itself,
and that $x$ is a neighbour of $y$ if and only if $y$ is a neighbour of $x$.
Then for $x\not=y$, $P(y|x) = \ooN \, \min\Big(1, \ {\pi(y) \over
\pi(x)}\Big)$.
And, by~\eqref{jumptrans}, the jump chain transition probabilities
have $\Phat(x|x)=0$ and for $x\not=y$ are given by
\begin{equation}\label{unifjumptrans}
\Phat(y|x)
\ = \ {  \min\Big(1, \ {\pi(y) \over \pi(x)}\Big)
\over \sum_{z \sim x} \min\Big(1, \ {\pi(z) \over \pi(x)}\Big) }
\end{equation}
where the sum is over all neighbours $z$ of $x$.
Also, here
\begin{equation}\label{unifalpha}
\alpha(x)
\ = \ \ooN \, \sum_{y \not= x} \min\Big(1, \ {\pi(y) \over \pi(x)} \Big)
\, .
\end{equation}

The use of the estimators \eqref{ebareqn} and \eqref{ealphaeqn} in the
context of uniform Metropolis algorithms can be carried out very
efficiently using special parallelised computer hardware (see e.g.\
Section~\ref{sec-numerical} below), and was our original motivation for
this investigation.

% We note that this rejection-free modification of the Metropolis algorithm
% is essentially what was used by Bortz et al.\ \cite{bortz} for an
% application to the Ising model.

\begin{remark}\rm\label{foldrefremark}
% Jump chains of MCMC algorithms
% have previously been used for estimation in various contexts. 
%There is a precedent for using the jump chain of a Metropolis-Hastings
%algorithm for estimation.
The ``$n$-fold way'' of Bortz et al.\
\cite{bortz} considers the Ising model, and selects
the next site to flip proportional to its probability of flipping, by
first classifying all sites in terms of their spin and neighbour counts.
This creates a rejection-free Metropolis-Hastings
algorithm in the same spirit as
our approach, though specific to the Ising model.
Later authors parallelised their algorithm, still for the
Ising model; see e.g.\ \cite{lubachevsky} and \cite{korniss}. 
% Our algorithm generalises this to
% Metropolis-Hastings algorithms for other targets besides Ising models.
\end{remark}

\section{Alternating Chains}

Sometimes we have two or more different Markov chains and we wish to
alternate between them in some pattern.
And, we might wish to use rejection-free
sampling for some or all of the individual chains.
However, if this is done naively, it can lead to bias:

% For example, this holds for
% parallel tempering (i.e.\ replica exchange) in which we with to perform
% some number $K$ of within-temperature moves before performing a
% temperature-exchange move.

\medskip\noindent {\bf Example~3: }
Let $S=\{1,2,3,4\}$, and $\pi=(1-\epsilon,3\epsilon,1-\epsilon,1-\epsilon)/3$
for some small positive number $\epsilon$ (e.g.\ $\epsilon=0.001$).  Let
$Q_1(x, \, x+1)=Q_1(x, \, x-1)=1/2$ and
$Q_2(x, \, x+1)=Q_2(x, \, x+2)=Q_2(x, \, x-1)=Q_2(x, \, x-2)=1/4$
be two different
proposal kernels, and let $P_1$ and $P_2$ be usual Metropolis
algorithms for $\pi$ with proposals $Q_1$ and $Q_2$ respectively.
Then, each of $P_1$ and $P_2$ will converge to $\pi$, as will the
algorithm of alternating between $P_1$ and $P_2$ any fixed number of
times.  However, if we instead alternate between doing one
{\it jump} step of $P_1$ and then one {\it jump} step of $P_2$,
then this combined chain will not converge to the correct distribution.
Indeed, the corresponding escape probabilities $\alpha_1(x)$ and
$\alpha_2(x)$ are all reasonably large (at least 1/4) except for
$\alpha_1(1)=\epsilon/2$ which is extremely small.  This means that when our
algorithm uses $P_1$ from state~1 then it will have an extremely large
multiplicity $M_k$ which will lead to extremely large weight of the
state~1.  Indeed,
if we use the alternating
jump chains algorithm, then the estimators $\ebar_L$ as
in~\eqref{ebareqn} will have the property that
as $\epsilon \searrow 0$, their limiting value converges to
$h(1)$ instead of $\pi(h)$, i.e.\
$$
\lim_{\epsilon \searrow 0} \lim_{L\to\infty} \ebar_L \ = \ h(1)
\, .
$$
Hence, convergence to $\pi$ fails in this case.
\qed

However, this convergence problem can be fixed if we control the number of
effective repetitions of each kernel.  Specifically, suppose
we choose in advance some number $L_0$ of effective repetitions we wish to
perform for the kernel $P_1$ before switching to the kernel $P_2$.
Then we can do this in a rejection-free manner as follows:

\noindent {\bf 1.} \
Set the number of remaining repetitions, $L$, equal to some fixed initial
value $L_0$.

\noindent {\bf 2.} \
Find the next jump chain value $J_k$ and multiplicity $M_k$
corresponding to the Markov chain $P_1$, as above.

\noindent {\bf 3.} \
If $M_k \ge L$, then replace $M_k$ by $L$, and keep $J_k$ as it is,
and include that $J_k$ and $M_k$ in the estimate.  Then,
return to step~{\bf 1} with the next kernel $P_2$.
% with fresh value $L$ (e.g.\ $L=L_0$).

\noindent {\bf 4.} \
Otherwise, if $M_k < L$, then keep $M_k$ and $J_k$ as they are, and count
them in the estimate, and then replace $L$ by $L-M_k$ and return to
step~{\bf 2} with the same kernel $P_1$.

% then repeat this procedure by again finding the next jump
% chain value $J_k$ and multiplicity $M_k$ as before.

\medskip

This modified algorithm is equivalent to applying the original
(non-rejection-free) kernel $P_1$ a total of
$L_0$ times before switching to the next kernel $P_2$.
As such, it has no bias,
and is consistent and will converge to the correct distribution without
any errors as in the counter-example above.

\section{Application to Parallel Tempering}

Parallel tempering (or, replica exchange) \cite{swendsen, geyer}
proceeds by considering different versions of the target distribution
$\pi$ powered by different inverse-temperatures $\beta$, of the
form $\pi^{(\beta)}(x) \propto \big(\pi(x)\big)^\beta$.
It runs separate MCMC algorithms on each target $\pi^{(\beta)}$, for some
fixed number of iterations, and then
proposes to ``swap'' pairs of values $X^{(\beta_1)} \leftrightarrow
X^{(\beta_2)}$.  This swap proposal is
accepted with the usual Metropolis algorithm probability
\begin{equation}\label{genswapprob}
\min\left[1, \ {
     \pi^{(\beta_1)}(X^{(\beta_2)}) \, \pi^{(\beta_2)}(X^{(\beta_1)})
      \over \pi^{(\beta_1)}(X^{(\beta_1)}) \, \pi^{(\beta_2)}(X^{(\beta_2)})
}\right]
\end{equation}
which preserves the product target measure $\prod_\beta \pi^{(\beta)}$.

But suppose we instead want to run parallel tempering using jump chains,
i.e.\ using a rejection-free algorithm within each temperature.
If we run a fixed number of
rejection-free moves of each within-temperature chain, followed by one
``usual'' swap move, then this can lead to bias, as the following example
shows.

\medskip\noindent {\bf Example~4: }
Let $S=\{1,2,3\}$, with $\pi(1)=\pi(3)=1/4$ and $\pi(2)=1/2$.
Suppose there are just two inverse-temperature values, $\beta_0=1$ and
$\beta_1=5$.  Suppose each within-temperature chain proceeds as a
Metropolis algorithm, with proposal distribution given by $Q(y|x)=1/2$
whenever $y\not=x$.  (That is, we can regard the three states of $S$ as
being in a circle, and the chain proposes to move one step clockwise or
counter-clockwise with probability 1/2 each, and then accepts or rejects
this move according to the usual Metropolis procedure.)
If we run a usual parallel tempering algorithm, then the within-temperature
moves will converge to the corresponding stationary distributions
$\pi^{(0)} = \pi = (1/4,1/2,1/4)$ and
$\pi^{(5)} = (1/34,32/34,1/34)$ respectively.
Then, given current chain values $X^{(0)}$ and $X^{(5)}$,
if we attempt a usual swap move, it will be accepted with probability
\begin{equation}\label{swapprob}
\min\left[1, \ {
     \pi^{(0)}(X^{(5)}) \ \pi^{(5)}(X^{(0)})
          \over \pi^{(0)}(X^{(0)}) \ \pi^{(5)}(X^{(5)}) }\right]
\, .
\end{equation}
These steps will all preserve the product stationary distribution
$\pi^{(0)} \times \pi^{(5)}$, as they should.
However, if we instead run a rejection-free within-temperature chain,
then convergence fails.  Indeed,
from each state the jump chain is equally likely to move
to either of the other two states, so each jump chain
will converge to the {\it
uniform} distribution on $S$.  The acceptance probability~\eqref{swapprob}
will then lead to incorrect distributional convergence,
e.g.\ if $X^{(0)}=2$ and $X^{(5)}=3$, then a proposal
to swap $X^{(0)}$ and $X^{(5)}$ will always be accepted, leading
to an excessively large probability that $X^{(0)}=3$.
Indeed, in simulations\footnote{Performed using the R program
available at: http://probability.ca/rejectionfreesim}
the fraction of time that $X^{(0)}=3$ right after a swap proposal is about
44\%, much larger than the 1/3 probability it should be.
\qed

To get rejection-free parallel tempering to converge
correctly, we recall from
Proposition~\ref{jumpstatprop} that the
rejection-free chains actually converge to the modified stationary
distributions $\pihat$, not $\pi$.
We should thus modify the acceptance probability~\eqref{genswapprob} to:
$$
\min\left[1, \ {
     \pihat^{(\beta_1)}(X^{(\beta_2)}) \ \pihat^{(\beta_2)}(X^{(\beta_1)})
  \over \pihat^{(\beta_1)}(X^{(\beta_1)}) \ \pihat^{(\beta_2)}(X^{(\beta_2)})
}\right]
\qquad\qquad\qquad\qquad\qquad\qquad\qquad\qquad
$$
\begin{equation}\label{modgenswapprob}
\ = \ \min\left[1, \ {
     \alpha^{(\beta_1)}(X^{(\beta_2)}) \ \pi^{(\beta_1)}(X^{(\beta_2)})
	\ \alpha^{(\beta_2)}(X^{(\beta_1)}) \ \pi^{(\beta_2)}(X^{(\beta_1)} )
      \over \alpha^{(\beta_1)}(X^{(\beta_1)}) \ \pi^{(\beta_1)}(X^{(\beta_1)})
	\ \alpha^{(\beta_2)}(X^{(\beta_2)}) \ \pi^{(\beta_2)}(X^{(\beta_2)} )
}\right]
\, .
\end{equation}
Such swaps will preserve the product modified stationary distribution
$\prod_\beta \pihat^{(\beta)}$,
rather than trying to preserve the unmodified stationary distribution
$\prod_\beta \pi^{(\beta)}$.
(If necessary, the escape probabilities $\alpha(x)$ can be estimated from
a preliminary run.)
The rejection-free
parallel tempering algorithm will thus converge to
$\prod_\beta \pihat^{(\beta)}$,
thus still allowing for valid inference as in
Theorems~\ref{mainone} and~\ref{maintwo}.

\medskip\noindent {\bf Example~4 (continued): }
In this example,
$\alpha^{(0)}(1) = \alpha^{(0)}(3) = \alpha^{(5)}(1) = \alpha^{(5)}(3) = 1$,
$\alpha^{(0)}(2) = 1/2$, and $\alpha^{(5)}(2) = 1/32$.
So, if $X^{(0)}=2$ and $X^{(5)}=3$, then
according to~\eqref{modgenswapprob}, a proposal
to swap $X^{(0)}$ and $X^{(5)}$ will be accepted with probability
$$
\min\left[1, \ {
     \alpha^{(0)}(X^{(5)}) \ \pi^{(0)}(X^{(5)})
	\ \alpha^{(5)}(X^{(0)}) \ \pi^{(5)}(X^{(0)} )
          \over \alpha^{(0)}(X^{(0)}) \ \pi^{(0)}(X^{(0)})
	\ \alpha^{(5)}(X^{(5)}) \ \pi^{(5)}(X^{(5)} )
}\right]
\qquad\qquad\qquad\qquad
$$
$$
\ = \
\min\left[1, \ { (1) (1/4) (1/32) (1/2) \over (1/2) (1/2) (1) (1/34) } \right]
	\ = \ 34/64 \ = \ 17/32
\, ,
$$
and such swaps will instead preserve the product stationary distribution
$\pihat^{(0)} \times \pihat^{(5)}$.
Indeed, in simulations\footnote{Performed using the R program
available at: http://probability.ca/rejectionfreemod}
the fraction of time that $X^{(0)}=3$ right after a swap proposal with
this modified acceptance probability becomes about 1/3, as it should be.
\qed

% \section{Annealing Version?}
% 
% % Also, I have not yet looked at the {\it annealing} versions of this issue,
% % which Aki mentioned in his last e-mail.
% One initial comment is: For annealing, you would have to use the
% appropriate {\it powers} of $\pi$, i.e.\ replace $\pi$ by something like
% $\pi^{\tau}$ in the definitions of $\Phat$ and $\alpha$ when the
% temperature is equal to $\tau$.  Did you already do that?
% We can discuss this further, too.

\iffujitsu

\section{How to Sample Proportionally}

{\bf [NOTE: This section can be removed if Fujitsu wishes.]}

Given $A_i>0$ for $i=1,2,\ldots,N$, how can we sample $Z$ so that
$\P(Z=i) = A_i \bigm/ \sum_j A_j$?

We could choose $U \sim \Uniform[0,1]$, and then set $Z = \min\{i :
\sum_{j=1}^i A_j > U \sum_{j=1}^N A_j \}$.  But this involves summing all
of the $A_j$, which is inefficient.

If $\sum_j A_j = 1$, then we could choose $U \sim \Uniform[0,1]$ and just
set $Z = \min\{i : \sum_{j=1}^i A_j > U\}$.  This is slightly easier, and
can be done by binary searching.  But it still requires summing lots of the
$A_j$, which could still be inefficient.

If $\sum_j A_j < 1$, then we could choose $U \sim \Uniform[0,1]$, and then
still set $Z = \max\{i : \sum_{j=1}^i A_j > U\}$, except if no such $i$
exists then we {\it reject} that choice of $U$ and start again.  But in
addition to the previous problems, this could involve lots of
{\it rejection} if $\sum_j A_j$ is much smaller than~1, which is again
inefficient.

Another option is the following method, based on Efraimidis and
Spirakis~\cite{efraimidis}; see also the {\it $n$-fold way} approach to
{\it kinetic Monte Carlo} in Bortz et al.~\cite{bortz}.

\begin{proposition}\label{kineticprop}%
Let $A_1,A_2,\ldots,A_N$ be positive numbers.
Let $\{R_j\}_{j=1}^N$ be i.i.d.~$\sim \Uniform[0,1]$, and let
$d_j = - \log(R_j)/A_j$ for $j=1,2,\ldots,N$.
Finally, set $Z = \argmin_j \, d_j$.
Then $\P[Z=i] = A_i / \sum_j A_j$, i.e.\
$Z$ selects $i$ from $\{1,2,\ldots,N\}$ with probability proportional
to $A_i$.
(This is useful since \argmin\ can be computed
% using a binary tree, and is therefore
much faster than dividing by the sums. [EXPLAIN?])
\end{proposition}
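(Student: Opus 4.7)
The plan is to reduce the claim to the well-known fact about the minimum of independent exponential random variables, by showing that each $d_j$ is exponentially distributed with rate $A_j$.

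First I would observe that if $R \sim \Uniform[0,1]$, then $-\log(R)$ is exponential with rate $1$ (standard inverse-CDF computation: $\P[-\log R > t] = \P[R < e^{-t}] = e^{-t}$ for $t \ge 0$). Therefore, by a trivial scaling, $d_j = -\log(R_j)/A_j$ satisfies $\P[d_j > t] = \P[-\log(R_j) > A_j t] = e^{-A_j t}$, so $d_j \sim \textrm{Exp}(A_j)$. Moreover, since the $R_j$ are independent, so are the $d_j$.

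Next I would invoke the two standard facts about independent exponentials: if $d_j \sim \textrm{Exp}(A_j)$ independently, then (a) $\min_j d_j \sim \textrm{Exp}(\sum_j A_j)$, and (b) the index $Z = \argmin_j d_j$ is independent of the minimum value and satisfies $\P[Z=i] = A_i / \sum_j A_j$. The quickest way to establish (b) directly is the one-line computation
$$
\P[Z = i] \ = \ \P[d_i < d_j \textrm{ for all } j \not= i]
\ = \ \int_0^\infty A_i e^{-A_i t} \prod_{j \not= i} e^{-A_j t} \, dt
\ = \ {A_i \over \sum_j A_j}
\, ,
$$
which is exactly what we want.

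There is essentially no obstacle here: the result is a very classical property of the exponential distribution (``competing exponential clocks''), and the only content of the proposition is recognising that the transformation $-\log(R_j)/A_j$ produces an $\textrm{Exp}(A_j)$ random variable. The computational efficiency remark at the end is not part of the claim to be proved; it is a separate observation that \argmin\ can be evaluated in parallel (e.g.\ via a reduction tree) without forming the cumulative sums $\sum_{j=1}^i A_j$, which is what makes this sampling method attractive on specialised hardware.
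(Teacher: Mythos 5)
Your proof is correct, and at its core it is the same computation as the paper's: both arguments condition on the value of the $i$-th variable, use independence of the $R_j$, and integrate. The only real difference is the coordinate system. The paper stays with the uniforms, rewriting $d_j > d_i$ as $R_j < R_i^{A_j/A_i}$ and integrating over $x = R_i \in [0,1]$; you first observe that $d_j \sim \mathrm{Exp}(A_j)$ and integrate over $t = d_i \in [0,\infty)$, and the two integrals are related by the substitution $x = e^{-A_i t}$. Your framing has the advantage of reducing the proposition to the classical ``competing exponential clocks'' fact (which could simply be cited, and which also explains the connection to kinetic Monte Carlo), while the paper's version is self-contained and never needs to name the exponential distribution. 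In both write-ups the identification of $\{Z=i\}$ with $\{d_i < d_j \ \forall j \not= i\}$ silently discards ties, which is harmless since the $d_j$ are continuous and ties have probability zero, so $\argmin$ is almost surely well defined.
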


\begin{proof}
We compute that
$$
\P[Z=i]
\ = \ \P[d_j>d_i \ \forall j\not=i ]
\qquad\qquad\qquad\qquad\qquad\qquad\qquad\qquad\qquad
$$
$$
\ = \ \P\Big[ - \log(R_j)/A_j > - \log(R_i)/A_i \ \forall j\not=i \Big]
\ = \
\P\Big[ R_j < R_i^{A_j/A_i} \ \forall j\not=i \Big]
$$
$$
\ = \ \int_0^1
\P\Big[ R_j < R_i^{A_j/A_i} \ \forall j\not=i \, \Big| \, R_i=x\Big] \ dx
\ = \ \int_0^1 \P\big[ R_j < x^{A_j/A_i} \ \forall j\not=i \big] \ dx
$$
$$
\ = \ \int_0^1 \prod_{j\not=i} x^{A_j/A_i} \ dx
\ = \ \int_0^1 x^{\big(\sum_{j\not=i} A_j\big)/A_i} \ dx
\ = \ {x^{\Big(\big(\sum_{j\not=i} A_j\big)/A_i\Big)+1}
	\over \Big(\big(\sum_{j\not=i} A_j\big)/A_i\Big)+1} \Biggm|_{x=0}^{x=1}
$$
$$
\ = \ {1 \over \Big(\big(\sum_{j\not=i} A_j\big)/A_i\Big)+1}
\ = \ {A_i \over \Big(\sum_{j\not=i} A_j\Big)+A_i}
\ = \ {A_i \over \sum_{j=1}^N A_j}
$$
as required.
\end{proof}

% $$
% \ = \ \P[ \log(R_i)/\log(R_j) > A_i/A_j ]
% $$
% $$
% \ = \ \P[ (R_j/R_i) > \exp(-A_i/A_j) ]
% \ = \ 1 - \P[ (R_j/R_i) < \exp(-A_i/A_j) ]
% $$
% But it is known that if $0<z<1$, then
% $\P[R_j/R_i < z] = z/2$.  Hence,
% $$
% \P[d_i>d_j]
% \ = \ 1 - \P[ (R_j/R_i) < \exp(-A_i/A_j) ]
% \ = \ 1 - \exp(-A_i/A_j)/2
% $$
% (Hm, this does not seem to be working \ldots\ perhaps I misunderstood the
% definition of the $d_j$ variables?)

\fi

\section{Numerical Examples}
\label{sec-numerical}

In this section, we introduce applications and simulations to illustrate
the advantage of Rejection-Free algorithm. We compare the efficiency of
the Rejection-Free and standard Metropolis algorithms in three different
examples. The first example is a Bayesian inference model on a real data
set taken from the Education Longitudinal Study of~\cite{ELS:2002}.
The second example involves sampling from
a two-dimensional ferromagnetic $4 \times4$ Ising
model.  The third example is a pseudo-marginal~\cite{andrieu} version
of the Ising model.  All three simulations show that the introduction
of the Rejection-Free method leads to
significant speedup.  This provides concrete numerical evidence for
the efficiency of using the rejection-free approach to improve the
convergence to stationarity of the algorithms.

% \subsection{A Simple Bayesian Example on Discrete Parameter Space}
\subsection{A Bayesian Inference Problem with Real Data}

For our first example, we consider the
Education Longitudinal Study~\cite{ELS:2002}
real data set consisting of final course grades of over 9,000
students. We take a random subset of 200 of these
9,000 students, and denote their scores as $x_1, x_2, \dots, x_{200}$.
(Note that all scores in this data set are integers between 0 and 100.)

Our parameter of interest $\theta$ is the true average value of the
final grades for these 200 students, rounded to 1 decimal place (so
$\theta \in \{0.1, 0.2, 0.3, \dots, 99.7, 99.8, 99.9\}$ is still discrete,
and can be studied using specialised computer hardware).
The likelihood function for this model is the binomial distribution
\begin{equation}\label{example1likelihood}
\textbf{L}(x|\theta) \ = \  {100 \choose x} \, \theta^x \, (1-\theta)^{100-x}
\, .
\end{equation}
For our prior distribution, we take
\begin{equation}\label{example1prior}
    \theta \ \sim \ \Uniform\{0.1, 0.2, 0.3, \dots,
							99.7, 99.8, 99.9\}
\, .
\end{equation}
The posterior distribution $\pi(\theta)$ is then proportional to
the prior probability function~\eqref{example1prior}
times the likelihood function~\eqref{example1likelihood}.

We ran an Independence Sampler for this posterior distribution,
with fixed proposal
distribution equal to the prior~\eqref{example1prior}, either with or
without the Rejection-Free modification.  For each of these two algorithms,
we calculated the effective sample size, defined as
$$
    \textit{ESS}(\theta) = \frac{N}{1+ 2 \sum_{k=1}^\infty \rho_k(\theta)}
\, ,
$$
where N is the number of posterior samples,
and $\rho_k(\theta)$ represents
autocorrelation at lag k for the posterior samples of $\theta$. (For
a chain of finite length, the sum $\sum_{k=1}^\infty \rho_k(\theta)$
cannot be taken over all $k$, so instead we just sum until the values
of $\rho_k(\theta)$ become negligible.)
For fair comparison, we consider both the ESS per
iteration, and the ESS per second of CPU time.

\begin{table}[]
    \centering
    \begin{tabular}{|c|c|c|}
        \hline
         &  ESS per Iteration & ESS per CPU Second\\
        \hline
        Metropolis  & 0.0074 & 47 \\
        \hline
        Rejection-Free & 0.9100 & 4,261 \\
        \hline
        \hline
        Ratio & 123.0 & 90.7 \\
        \hline
    \end{tabular}
    \caption{Median of Effective Sample Sizes from 100 Runs each
of the Metropolis and Rejection-Free algorithms}
    \label{table2}
\end{table}

Table~\ref{table2} presents
the median ESS per iteration, and median ESS per second, from 100
runs of $100,000$ iterations each, for each of the two algorithms.
We see from Table~\ref{table2} that Rejection-Free
outperforms the Metropolis algorithm by a factor of approximately one
hundred, in terms of both ESS per iteration and ESS per second.
This clearly illustrates the efficiency of the Rejection-Free algorithm.

% In this case, the true value of $\theta$ should be concentrated at
% 50, while our prior is uniform on 0 to 100. Thus, our prior contains
% little information, and we must have many rejections for Metropolis
% algorithm. As a comparison, there is not rejection for Rejection-Free.

% Note that the expected numbers of times that the Rejection-Free
% chain would stay on each state are usually not integers. However,
% calculating the effective sample size, we need the autocorrelation
% for different lags, which requires the chain to stay on each state for
% integer many of times. Thus, we took $m' = \lfloor m \rfloor + a$, and
% $a \sim \textrm{Bernoulli}(\{m\})$ for each given expected number $m
% \in \mathbb{R}$. Here, $\lfloor m \rfloor$ represents the integer part,
% and $\{m\}$ donates the fractional part. Then we shall have a integer
% multiplicity list to calculate the approximate effective sample size
% for Rejection-Free chain.

% \subsection{Apply Rejection-Free to the Ising model}
\subsection{Simulations of an Ising Model}

We next present a simulation study of a ferromagnetic Ising model on
a two-dimensional $4 \times 4$ square latice. The energy function for
this model is given by
$$
    E(S) = - \sum_{i<j}J_{i j} s_i s_j
\, ,
$$
where each spin $s_i,s_j \in \{-1,1\}$,
and $J_{i j}$ represents the interaction between the $i^{\rm
th}$ and $j^{\rm th}$ spins. To make
only the neighbouring spins in the lattice
interact with each other, we take $J_{i j} = 1$ for all neighbours $i$
and $j$, and $J_{i j}$ = 0 otherwise.
The Ising model then has probability distribution
propositional to the exponential of the energy function:
$$
% \textbf{P}(S) \propto \exp\{- \frac{E(S)}{T}\}
\Pi(S) \ \propto \ \exp[-E(S)]
\, .
$$
% where E is energy function of states, and T is the absolute temperature.

We investigate the efficiency of the samples produced in four different
scenarios: Metropolis algorithm and Rejection-Free, both with and
without Parallel Tempering.  For the Parallel Tempering versions, we set
$$
\Pi_T(S) \ \propto \ \exp[-E(S) \, / \, T]
\, .
$$
Here $T = 1$ is the
temperature of interest (which we want to sample from).
We take $T = 2$ as the highest
temperature, since when $T = 2$ the probability
distribution for magnetization is quite flat (with highest probability
$\textbf{P}[M(S) = 14] = 0.083$, and lowest probability
$\textbf{P}[M(S) = 2] = 0.037$).
Including the one additional temperature $T =
\sqrt{2}$ gives three temperatures $1,
\sqrt{2}, 2$ in geometric progression,
with average swap acceptance rate
$31.6\%$ which is already higher than the $23.4\%$ recommended
in~\cite{sectemper}, indicating that three temperatures is enough.

We study convergence of the {\it magnetization} value, where
the magnetization of a given state $S$ of the Ising model is defined as: 
$$
% M(S) \ = \ \sum_{i=1}^N s_i
M(S) \ = \ \sum_i s_i
$$
% where $N = 16$ is the total number of spins.
For our $4 \times 4$ Ising model, 
$$
M(S) \in \mathcal{M} = \{-16, -14, -12, \dots, -2, 0, 2, \dots, 12, 14, 16\}
\, .
$$
We measure the distance to stationarity
by the {\it total variation distance}
between the sampled and the actual magnetization distributions
after $n$ iterations, defined as:
$$
    \textrm{TVD}(n) \ = \
\frac{1}{2} \sum_{m \in \mathcal{M}} \Big| \P[M(X_n)=m]
			- \Pi\{S : M(S)=m\} \Big|
\, ,
$$
where $M(X_n)$ is the magnetization of the chain at iteration~$n$,
and $\Pi\{S : M(S)=m\}$ represents the stationary probability of
magnetization value $m$.  Thus, convergence to stationarity
is described by how quickly TVD$(n)$ decreases to~0.

Figure~\ref{figure1} lists
the average total variation distance TVD$(n)$ for each version,
as a function of the number of iterations~$n$,
based on 100 runs of each of the four scenarios, of $10^6$
iterations each.  It illustrates that, with or without Parallel
Tempering, the use of Rejection-Free provides significant speedup,
and TVD decreases much more rapidly with the Rejection-Free method than
without it. This
provides concrete numerical evidence for the efficiency of using
Rejection-Free to improve the convergence to stationarity of the
algorithm.

\begin{figure}
    \centering
    \includegraphics[scale=0.4]{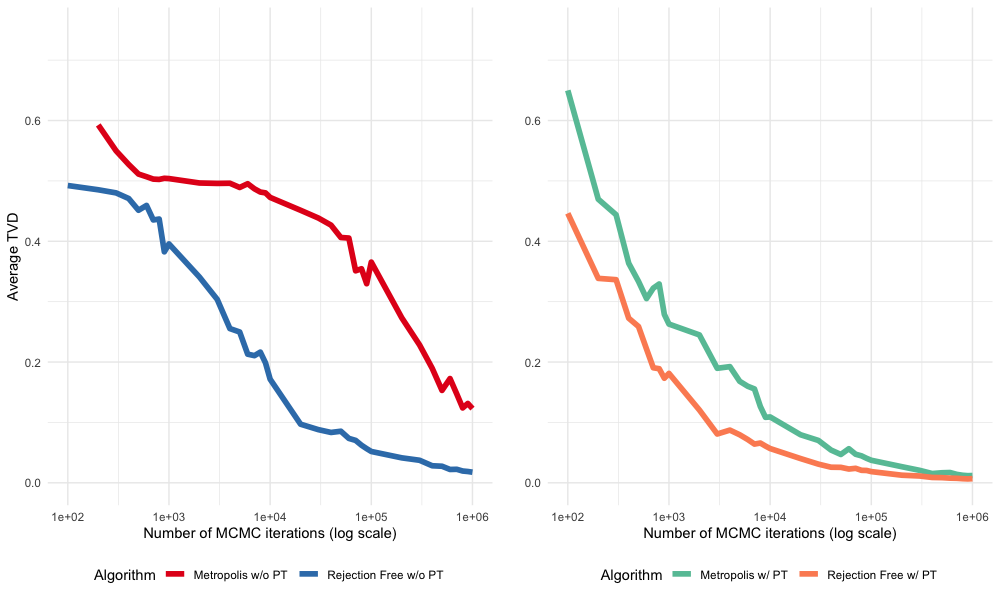}
    \caption{Average total variation distance TVD$(n)$ between sampled and
    actual distributions as a function of the number of iterations, for
    four scenarios: Metropolis versus Rejection-Free both without
    (left) and with (right) Parallel Tempering.}
    \label{figure1}
\end{figure}

We next consider the issue of computational cost.
The Rejection-Free method requires
computing probabilities for all neighbors of the current
state. However, with specialised computer hardware, Rejection-Free can
be very efficient since the calculation of the probabilities for all neighbours
and selection of the next state can both be done in parallel. The
computational cost of each iteration of Rejection-Free is therefore
equal to the maximum
cost used on each neighbor. Similarly, for Parallel Tempering, we can
calculate all of the different temperature chains in parallel.
The average CPU time per iteration for each of the
four different scenarios are presented in Table~\ref{table1}.  It
illustrates that the computational cost of Rejection-Free without
Parallel Tempering was comparable to that of the usual Metropolis
algorithm, though Rejection-Free with Parallel Tempering does require
up to 50\% more time than the other three scenarios.

\begin{table}[]
    \centering
    \begin{tabular}{|c|c|}
        \hline
        Algorithm & Average CPU Time (nanoseconds) \\
        \hline
        Metropolis w/o PT & 420\\
        Rejection-Free w/o PT &  407\\
        Metropolis w/ PT & 463\\
        Rejection-Free w/ PT & 611\\
        \hline
    \end{tabular}
    \caption{Average CPU time per iteration for each of four scenarios:
Metropolis and Rejection-Free, both with and without Parallel Tempering}
    \label{table1}
\end{table}

Figure~\ref{figure2} shows the average total variation
distance as a function of the total CPU time used for
each algorithm. Figure~\ref{figure2} is quite similar to
Figure~\ref{figure1}, and gives the same overall conclusion:
with or without Parallel Tempering, the use of Rejection-Free provides
significant speedup, even when computational cost is taken into account.

\begin{figure}
    \centering
    \includegraphics[scale=0.4]{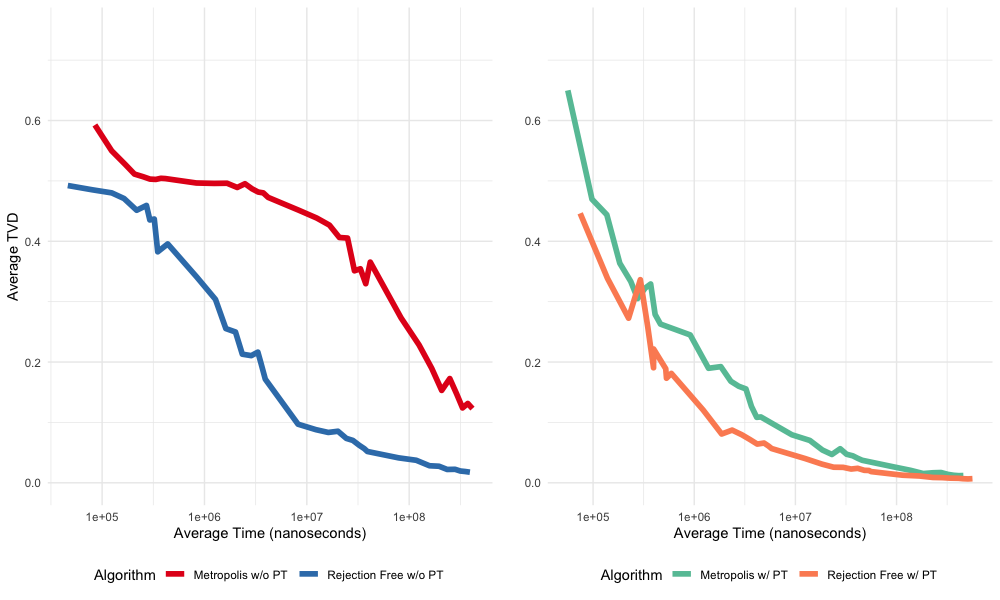}
    \caption{Average total variation distance TVD$(n)$ between sampled and
    actual distributions as a function of CPU time cost, for
    four scenarios: Metropolis versus Rejection-Free both without
    (left) and with (right) Parallel Tempering.}
    \label{figure2}
\end{figure}

As a final check, we also calculated the effective sample size, similar
to the first example. First, we generated 100 MCMC chains of
$100,000$ iterations each, from all four algorithms. Then, we calculated the
effective sample size for each chain, and normalized the results
by either the number of iterations or the total CPU time for each
algorithm. Table~\ref{table3} shows the median of ESS per iteration and
ESS per CPU second. It again illustrates that
Rejection-Free can produce great speedups, increasing the ESS per CPU
second by a factor of over 50 without Parallel Tempering, or a factor of 2
with Parallel Tempering.

\begin{table}[]
    \centering
    \begin{tabular}{|c|c|c|}
        \hline
         &  ESS per Iteration & ESS per CPU Second\\
        \hline
        Metropolis w/0 PT & 0.0003 & 77.76 \\
        \hline
        Rejection-Free w/o PT & 0.0016 & 4,227 \\
        \hline
         Metropolis w/ PT & 0.0057 & 11,830 \\
        \hline
        Rejection-Free w/ PT & 0.0138 & 23,459 \\
        \hline
    \end{tabular}
    \caption{Median of Normalized Effective Sample Sizes for four scenarios:
Metropolis and Rejection-Free, both with and without Parallel Tempering}
    \label{table3}
\end{table}
  
\subsection{A Pseudo-Marginal MCMC Example}

If the target density itself is not available analytically, but an
unbiased estimate exists, then {\it pseudo-marginal MCMC} \cite{andrieu}
can still be used to sample from the correct target distribution.
We next apply the Rejection-Free method to a pseudo-marginal algorithm
to show that Rejection-Free can provide speedups in that case, too.

In the previous example of the $4 \times 4$ Ising model,
the target probability distributions were defined as
$$
    \Pi(S) \ \propto \ \exp\{- \frac{E(S)}{T}\}
\, .
$$
We now pretend that this target density is not available, and we
only have access to an unbiased estimator given by
$$
\Pi_0(S)
\ \propto \ \Pi(S) \times A
\ = \ \exp\{\frac{E(S)}{T}\} \times A
\, ,
$$
where $A \sim \textrm{Gamma}(\alpha = 10, \beta = 10)$ is a random
variable (which is sampled independently
every time as we try to compute the target
distribution).  Note that $\E(A) = 10/10 = 1$,
so $\E[\Pi_0(S)] = \Pi(S)$, and the estimator is unbiased (though $A$ has
variance $10/10^2 = 1/10 > 0$).

Using this unbiased estimate of the target distribution as for
pseudo-marginal MCMC, we again investigated the convergence of samples
produced by the same four scenarios: Metropolis and Rejection-Free,
both with and without Parallel Tempering.  Figure~\ref{figure3} shows
the average total variation distance TVD$(n)$ between the sampled and the
actual magnetization distributions, for 100 chains, as a function of
the iteration~$n$, keeping all the other settings the same as before.
This figure is quite similar
to Figure~\ref{figure1}, again showing that
with or without Parallel Tempering, the use of Rejection-Free provides
significant speedup, even in the pseudo-marginal case.

\begin{figure}
    \centering
    \includegraphics[scale=0.4]{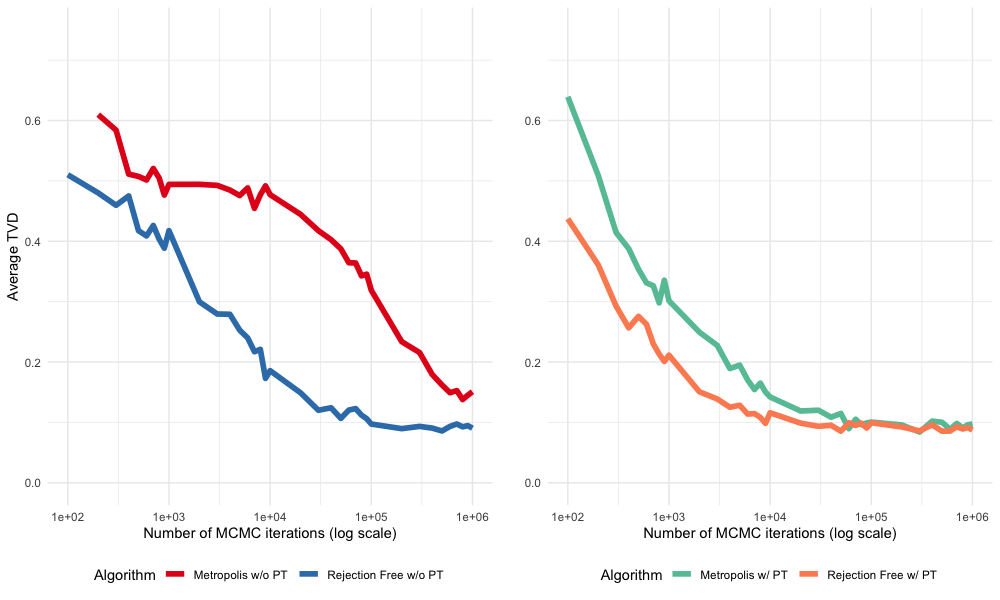}
    \caption{Average of total variation distance between sampled and
    actual distributions as a function of number of iterations for
    probability with noise Gamma(10, 10) in four scenarios: Metropolis
    versus Rejection-Free without Parallel Tempering (left) and with
    Parallel Tempering (right)}
    \label{figure3}
\end{figure}

\section{Summary}

This paper has considered the use of parallelised computer hardware to 
run rejection-free versions of the Metropolis algorithm.  We showed that
the Uniform Selection Algorithm might fail to converge to the correct
distribution or even visit the maximal value.  However, the Jump Chain
with appropriate weightings can provide consistent estimates of expected
values in an efficient rejection-free manner.  Care must be taken when
alternating between multiple rejection-free chains, or when using
rejection-free chains for parallel tempering, but appropriate adjustments
allow for valid samplers in those cases as well.  Simulations of our
methods on several examples illustrate the significant speedups that result
from using the Rejection-Free method to obtain more efficient samples.

\section{Appendix: Proof of Proposition~\ref{appendixprop}}

% In this Appendix, we prove the following theorem:
% 
% To prove this theorem, we need some preliminary results:

\begin{lemma}\label{locallemma}
For the Uniform Selection chain of Figure~\ref{infuniffig},
% For $0 \le x \le 4$,
let $s(x) = \P($hit 4 before 0$\, | \, X_0=x)$.
Then $s(0)=0$, $s(1)=3/7$, $s(2)=4/7$, $s(3)=13/21$, and $s(4)=1$.
\end{lemma}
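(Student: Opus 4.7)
The natural approach is first-step analysis on the finite window $\{0,1,2,3,4\}$. The boundary conditions $s(0)=0$ and $s(4)=1$ are immediate from the definition of $s$. Since the Uniform Selection chain moves only between nearest-neighbour integers, a chain started in $\{0,\ldots,4\}$ stays in that window until it first hits $0$ or $4$, so the hitting probability $s$ at the interior states $1,2,3$ is determined by conditioning on the first step.

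To set up the equations, the main task is to read off the one-step transition probabilities of the Uniform Selection chain at $x=1,2,3$ from Figure~\ref{infuniffig}. From the form of $\pi$ in Example~2, the nearest-neighbour ratios $\pi(y)/\pi(x)$ are $1/2$ or $2$ in every case except from state $3$, where $\pi(4)/\pi(3) = (8/9)\,(2^0/2^3) = 1/9$. Running the Uniform Selection recipe (sample $U$, form $C = \{y : U < \pi(y)/\pi(x)\}$, pick uniformly from $C$, resample $U$ if $C=\emptyset$) at $x=1$ and $x=2$ reproduces exactly the calculation already done for Example~1 and gives probability $3/4$ of moving to the larger neighbour and $1/4$ to the smaller. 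For $x=3$, the threshold $U$ lies in $(0,1/9)$, $(1/9,1/2)$, or $(1/2,1)$ with corresponding $C$ equal to $\{2,4\}$, $\{2\}$, or $\emptyset$; conditioning on $C\not=\emptyset$ (which occurs with probability $1/2$) yields $\Phat(4|3) = (1/18)/(1/2) = 1/9$ and $\Phat(2|3) = 8/9$.

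First-step analysis then produces the system
$$ s(1) = (1/4)\,s(0) + (3/4)\,s(2), \quad s(2) = (1/4)\,s(1) + (3/4)\,s(3), \quad s(3) = (8/9)\,s(2) + (1/9)\,s(4). $$
Plugging in $s(0)=0$ and $s(4)=1$, the first and third equations express $s(1)$ and $s(3)$ linearly in $s(2)$; substituting into the middle equation collapses everything to a single linear equation in $s(2)$, whose solution is $s(2) = 4/7$, and then $s(1) = 3/7$ and $s(3) = 13/21$ follow by back-substitution. The only step that requires care is the transition from state $3$, because of the asymmetric Uniform Selection behaviour when the acceptance ratio is small and because of the conditional normalisation caused by resampling when $C$ is empty; the rest of the argument is routine.
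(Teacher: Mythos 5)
Your proof is correct and follows essentially the same route as the paper: first-step analysis on $\{0,1,2,3,4\}$ with the transitions $p_{1,2}=p_{2,3}=3/4$, $p_{3,4}=1/9$, leading to the identical linear system and the values $s(1)=3/7$, $s(2)=4/7$, $s(3)=13/21$. The only difference is that you explicitly derive those transition probabilities from the Uniform Selection mechanism (including the resampling when $C=\emptyset$ at state~3), whereas the paper simply reads them off Figure~\ref{infuniffig}; your derivation is correct and makes the argument more self-contained.
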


\proof
% This is the {\it Gambler's Ruin} formula with $p=3/4$
% (see e.g.\ equation~(7.2.2) of Rosenthal, 2006).
Clearly $s(0)=0$ and $s(4)=1$.
Also, by conditioning on the first step, for $1 \le x \le 3$ we have
$s(x) = p_{x,x-1} \, s(x-1) + p_{x,x+1} \, s(x+1)$.
In particular, $s(1) = (1/4) s(0) + (3/4) s(2) = (3/4) s(2)$,
and $s(2) = (1/4) s(1) + (3/4) s(3)$,
and $s(3) = (8/9) s(2) + (1/9) s(4) = (8/9) s(2) + (1/9)$.
We solve these equations using algebra.
Substituting the first equation into the second,
$s(2) = (1/4)(3/4) s(2) + (3/4) s(3)$, so
$(13/16) s(2) = (3/4) s(3)$, so $s(3) = (13/16)(4/3) s(2) = (13/12) s(2)$.
Then the third equation gives $(13/12) s(2) = (8/9) s(2) + (1/9)$,
so $(7/36) s(2) = (1/9)$, so $s(2)=(1/9)(36/7) = 4/7$.
Then $s(1) = (3/4) s(2) = (3/4)(4/7) = 3/7$,
and $s(3) = (8/9) s(2) + (1/9) = (8/9) (4/7) + (1/9) = 13/21$,
as claimed.
\qed

\begin{lemma}\label{fourslemma}
Suppose the Uniform Selection chain for Example~2 begins at state $x=4a$ for
some positive integer $a$.  Let $C$ be the event that the chain hits
$4(a+1)$ before hitting $4(a-1)$.  Then $q := \P(C) = 9/17 > 1/2$.
\end{lemma}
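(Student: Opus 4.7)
The plan is to reduce the problem to a finite absorbing Markov chain on nine states and then leverage the calculation already done in Lemma~\ref{locallemma}.

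First, I would observe that the Uniform Selection transition probabilities in Example~2 are translation-invariant under shifts by 4. This is because the ratios $\pi(4a{+}b{+}1)/\pi(4a{+}b)$ for $b=0,1,2$ all equal 2, and the ratio $\pi(4(a{+}1))/\pi(4a{+}3)$ equals $1/9$, independently of $a$. Hence, for the purpose of computing the hitting probability in question, we may relabel $4(a-1),\,4(a-1)+1,\ldots,4(a+1)$ as $0,1,\ldots,8$, absorb the endpoints $0$ and $8$, and obtain exactly the same transition probabilities used in Lemma~\ref{locallemma} on the left (states $0$--$4$) and a mirrored copy of the same structure on the right (states $4$--$8$).

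Next, let $s(i) = \P(\text{hit $8$ before $0$}\,|\,X_0=i)$ for $0\le i\le 8$, so that $s(0)=0$, $s(8)=1$, and $q=s(4)$. The harmonic equations for $s(1),s(2),s(3)$ are \emph{identical} to those in Lemma~\ref{locallemma}, except that $s(4)$ is now unknown rather than fixed at $1$. By linearity in the boundary data (and using $s(0)=0$), the same algebra that yielded $s(3)=13/21$ in Lemma~\ref{locallemma} now yields $s(3) = (13/21)\, s(4)$. On the right side, the equations at states $5,6,7$ have the same structural form but with $s(4)$ and $s(8)=1$ playing the roles of the left and right boundaries. Solving this three-variable linear system (again straightforward substitution, analogous to the left-side calculation but with the ``upper'' boundary equal to $1$) expresses $s(5)$ as an affine function of $s(4)$; one finds $s(5) = (4\,s(4)+3)/7$.

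Finally, I would substitute both expressions into the harmonic equation at state~$4$, namely $s(4) = \tfrac{1}{2}\,s(3) + \tfrac{1}{2}\,s(5)$, since from $4a$ the uniform-selection chain moves to each neighbour with probability $1/2$ (both ratios $\pi(4a{-}1)/\pi(4a)=9$ and $\pi(4a{+}1)/\pi(4a)=2$ exceed~$1$). This produces a single linear equation in $s(4)$ which, after clearing denominators, reduces to $17\,s(4)=9$, so $q=s(4)=9/17$, as required. There is no real obstacle here: the structure of the problem is purely algebraic, and almost half the work is already packaged inside Lemma~\ref{locallemma}; the only care needed is in setting up the right-side three-variable system correctly, which is routine.
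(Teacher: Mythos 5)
Your proof is correct and is essentially the paper's argument: both reduce the problem to the nine-state block $\{4(a-1),\ldots,4(a+1)\}$ using the values $3/7$ and $13/21$ from Lemma~\ref{locallemma}, and both arrive at the same linear equation $17q=9$ via the harmonic relation at $4a$. The only cosmetic difference is that you obtain the intermediate quantities $s(3)=(13/21)s(4)$ and $s(5)=(4s(4)+3)/7$ by linearity of the boundary-value system, whereas the paper derives the identical relations probabilistically as a ``win / start over / lose'' first-step decomposition.
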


\proof
By conditioning on the first step, we have that
$$
q
\ = \
\P(C \, | \, X_0=4a)
\qquad\qquad\qquad\qquad\qquad\qquad\qquad\qquad\qquad\qquad\qquad\qquad\qquad
$$
$$
\ = \
\P(X_1=4a+1) \ \P(C \, | \, X_0=4a+1)
+ \P(X_1=4a-1) \ \P(C \, | \, X_0=4a-1)
$$
$$
\ = \
(1/2) \ \P(C \, | \, X_0=4a+1)
+ (1/2) \ \P(C \, | \, X_0=4a-1)
\, .
$$
But from $4a+1$, by Lemma~\ref{locallemma},
we either reach $4a+4$ before returning to $4a$ (and ``win'')
with probability 3/7, or we first return to $4a$ (and ``start over'')
with probability 4/7.
Similarly, from $4a-1$,
we either return to $4a$ (and ``start over'') with probability 13/21,
or we reach $4a-4$ before returning to $4a$ (and ``lose'')
with probability 8/21.
Hence,
$$
q
\ = \
(1/2) \, [ (3/7) + (4/7) q ]
+ (1/2) \, [ (13/21) q + 0]
\, .
$$
That is, $q = (3/14) + (2/7) q + (13/42) q = (3/14) + (25/42) q$.
Hence, $q = (3/14) \bigm/ (17/42) = 9/17 > 1/2$.
\qed

We then have:

\begin{corollary}\label{appendixcor}
Suppose the Uniform Selection chain for Example~2 begins at state $4a \ge
8$ for some positive integer $a \ge 2$.
Then the probability it will ever reach the state~4 is 
$(8/9)^{a-1} < 1$.
\end{corollary}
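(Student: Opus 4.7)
The plan is to reduce the problem to a biased simple random walk on the multiples of $4$ via the strong Markov property, and then compute the hitting probability by a standard first-passage argument. First I would define the embedded chain: let $T_0 = 0$, and for $k \ge 0$ let $T_{k+1}$ be the first time $n > T_k$ at which $X_n$ is a multiple of $4$ different from $X_{T_k}$; set $Y_k := X_{T_k}$. Then Lemma~\ref{fourslemma} together with the strong Markov property applied at each $T_k$ yields that $\{Y_k\}$ is itself a Markov chain on $\{0, 4, 8, 12, \ldots\}$ which, from any state $4m$ with $m \ge 1$, moves to $4(m+1)$ with probability $9/17$ and to $4(m-1)$ with probability $8/17$. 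The times $T_k$ are almost surely finite because each excursion between consecutive multiples of $4$ is a finite-state Markov chain that reaches the boundary with probability~$1$.

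Next, I would observe that since $\{X_n\}$ moves only by $\pm 1$, it visits state $4$ if and only if $Y_k = 4$ for some $k$. So writing $Z_k := Y_k/4$, the problem reduces to computing the hitting probability $h(a) := \P[Z_k = 1 \mbox{ for some } k \mid Z_0 = a]$ for the biased random walk $\{Z_k\}$ on $\{0, 1, 2, \ldots\}$ that has up-probability $p = 9/17$ and down-probability $q = 8/17$ from each level $\ge 1$.

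Finally, the hitting probability $h$ satisfies $h(1) = 1$ and $h(m) = p\,h(m+1) + q\,h(m-1)$ for $m \ge 2$, a linear recurrence with characteristic roots $1$ and $q/p = 8/9$, giving the general bounded solution $h(m) = A + B\,(8/9)^m$. The main step requiring care will be justifying that $h(\infty) = 0$ (which pins down $A = 0$ and upgrades a mere upper bound $h(a) \le (8/9)^{a-1}$ to the claimed equality); this holds because $p > q$, so by the strong law of large numbers $Z_k/k \to p - q = 1/17 > 0$ almost surely, and hence the walk is transient to $+\infty$. With $A = 0$, the condition $h(1) = 1$ forces $B = 9/8$, so $h(m) = (8/9)^{m-1}$, and specialising to $m = a$ gives the desired $(8/9)^{a-1}$.
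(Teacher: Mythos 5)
Your proposal is correct and follows essentially the same route as the paper: embed the chain at successive new multiples of $4$, invoke Lemma~\ref{fourslemma} and the strong Markov property to obtain a simple random walk with up-probability $9/17$ and down-probability $8/17$, and then compute the probability of ever reaching level~$1$. The only difference is that you derive the Gambler's Ruin hitting probability from the recurrence (using transience to fix the boundary condition at infinity), whereas the paper simply cites the standard formula.
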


\proof Consider a sub-chain $\{\Xt_n\}$ of $\{X_n\}$ which just records
new multiples of~4.  That is, if the original chain is at the state $4b$,
then the new chain is at $b$.  Then, we wait until the original reaches
either $4(b-1)$ or $4(b+1)$ at which point the next state of the new chain
is $b-1$ or $b+1$ respectively.  Then Lemma~\ref{fourslemma} says that
this new chain is performing simple random walk on the positive integers,
with up-probability 9/17 and down-probability 8/17.  Then it follows from
the Gambler's Ruin
formula (e.g.~\cite[equation~7.2.7]{grprobbook}) that, starting
from state~$a$, the probability that the new chain will ever reach the state~1 
is equal to $[(8/17)/(9/17)]^{a-1} = (8/9)^{a-1} < 1$, as claimed.
\qed

\medskip
Since the chain starting at $4a$ for $a \ge 2$
cannot reach state~3 without first reaching state~4,
Proposition~\ref{appendixprop}
follows immediately from Corollary~\ref{appendixcor}.

\medskip
If we instead cut off the example at the state $4L$, then the Gambler's
Ruin formula (e.g.~\cite[equation~7.2.2]{grprobbook})
says that from the state $4(L-1)$, the probability of
reaching the state~4 before returning to the state $4L$
is $[(9/8)^1-1] \bigm/ [(9/8)^{L-2}-1] < (8/9)^{L-1}$
(since $[A-1] \bigm/ [B-1] < A/B$ whenever $1<A<B$), so the expected
number of attempts to reach state~4 from state~$4L$ is
more than $(9/8)^{L-1}$.

% which is an exponentially large function of $L$.

\bigskip\noindent\bf Acknowledgements. \rm
This work was supported by research grants from Fujitsu Laboratories Ltd.
We thank the editor and referees for very helpful comments which have greatly
improved the manuscript.

% \section*{References}


\begin{thebibliography}{99}

\bibitem{andrieu}
C.~Andrieu and G.O.~Roberts (2009),
The pseudo-marginal approach for efficient Monte Carlo computations.
Ann.\ Stat.\ {\bf 37(2)}, 697--725.

\bibitem{bortz}
A.B.~Bortz, M.H.~Kalos, and J.L.~Lebowitz (1975),
A New Algorithm for Monte Carlo Simulation of Ising Spin Systems.
J.\ Comp.\ Phys.\ {\bf 17}, 10--18.

\bibitem{handbook}
S.~Brooks, A.~Gelman, G.L.~Jones, and X.-L.~Meng, eds.\ (2011),
Handbook of Markov chain Monte Carlo.
Chapman \& Hall / CRC Press.

\bibitem{deligiannidis}
G.~Deligiannidis and A.~Lee (2018),
Which ergodic averages have finite asymptotic variance?
Ann.\ Appl.\ Prob.\ {\bf 28(4)}, 2309--2334.

\bibitem{douc}
R.~Douc and C.P.~Robert (2011),
A vanilla Rao-Blackwellization of Metropolis-Hastings algorithms.
Ann.\ Stat.\ {\bf 39}, 261--277.

\bibitem{doucet}
A.~Doucet, M.K.~Pitt, G.~Deligiannidis, and R.~Kohn (2015),
Efficient implementation of Markov chain Monte Carlo when
using an unbiased likelihood estimator.
Biometrika {\bf 102(2)}, 295--313.

\bibitem{durrett}
R. Durrett (1999),
Essentials of stochastic processes.
Springer, New York.

\bibitem{ELS:2002}
National Center for Education Statistics (2002),
Education Longitudinal Study of 2002.
Available at:
{\tt https://nces.ed.gov/surveys/els2002/}

\iffujitsu
\bibitem{efraimidis}
P.S.~Efraimidis and P.G.~Spirakis,
Weighted random sampling with a reservoir.
Inform.\ Proc.\ Lett.\ {\bf 97}, 181--185.
\fi

\bibitem{geyer}
C.J.~Geyer (1991),
Markov chain Monte Carlo maximum likelihood.
In Computing Science and Statistics, Proceedings of
the 23rd Symposium on the Interface, American Statistical Association, New
York, 156--163.

\bibitem{hastings}
W.K. Hastings (1970), Monte Carlo sampling methods using Markov chains
and their applications.  Biometrika {\bf 57}, 97--109.

\bibitem{iliopoulos}
G.~Iliopoulos and S.~Malefaki (2013),
Variance reduction of estimators arising
from Metropolis-Hastings algorithms.
Stat.\ Comput.\ {\bf 23}, 577--587.

\bibitem{korniss}
G.~Kornissa, M.A.~Novotnya, and P.A.~Rikvoldab (1999),
Parallelization of a Dynamic Monte Carlo Algorithm: A Partially
Rejection-Free Conservative Approach.
J.\ Comp.\ Phys.\ {\bf 153(2)}, 488--508.
% 9812344.pdf

\bibitem{lubachevsky}
B.D.~Lubachevsky (1988),
Efficient Parallel Simulations of Dynamic Ising Spin Systems.
J.\ Comp.\ Phys.\ {\bf 75(1)}, 103--122.

\bibitem{malefaki}
S.~Malefaki and G.~Iliopoulos (2008),
On convergence of properly weighted samples to the target distribution.
J.\ Stat.\ Plan.\ Inference {\bf 138}, 1210--1225.

\bibitem{metropolis}
N. Metropolis, A. Rosenbluth, M. Rosenbluth, A. Teller, and E. Teller
(1953), Equations of state calculations by fast computing machines.  J.
Chem. Phys. {\bf 21}, 1087--1091.

\bibitem{MT}
S.P.~Meyn and R.L.~Tweedie (1993), Markov chains and
stochastic stability. Springer-Verlag, London. Available at:
http://probability.ca/MT/

\bibitem{RGG}
G.O.~Roberts, A.~Gelman, and W.R.~Gilks (1997), Weak convergence and
optimal scaling of random walk Metropolis algorithms.
Ann. Appl. Prob. {\bf 7}, 110--120.

\bibitem{statsci}
G.O.~Roberts and J.S.~Rosenthal (2001),
Optimal scaling for various Metropolis-Hastings algorithms.
Stat. Sci. {\bf 16}, 351--367.

\bibitem{sectemper}
G.O.~Roberts and J.S.~Rosenthal (2014),
Minimising MCMC Variance via Diffusion
Limits, with an Application to Simulated Tempering.
Ann.\ Appl.\ Prob.\ {\bf 24}, 131--149.

\bibitem{grprobbook}
J.S.~Rosenthal (2006),
A first look at rigorous probability theory, 2nd ed.
World Scientific Publishing, Singapore.

\bibitem{spbook}
J.S.~Rosenthal (2019),
A first look at stochastic processes.
World Scientific Publishing, Singapore.

\bibitem{swendsen}
R.H.~Swendsen and J.S.~Wang (1986),
Replica Monte Carlo simulation of spin
glasses. Phys.\ Rev.\ Lett.\ {\bf 57}, 2607--2609.

\end{thebibliography}
\end{document}